\newcommand{\C}{\mathcal{C}}
\newcommand{\R}{\mathbb{R}}
\newcommand{\cR}{\mathcal{R}}
\newcommand{\E}{\mathbb{E}}
\newcommand{\1}{\mathbf{1}}
\newcommand{\Var}{\operatorname{Var}}
\newcommand{\cS}{\boldsymbol\Delta}
\newcommand{\cF}{\mathcal{F}}
\newcommand{\cQ}{\mathcal{Q}}
\newcommand{\Unif}{\operatorname{Unif}}
\newcommand{\Ber}{\operatorname{Ber}}
\newcommand{\Q}{\mathbb{Q}}
\newcommand{\KL}{\operatorname{KL}}
\newcommand{\TV}{\operatorname{TV}}
\newtheorem{theorem}{Theorem}
\newtheorem{proposition}{Proposition}
\renewcommand{\hat}{\widehat}
\renewcommand{\P}{\mathbb{P}}
\begin{document}
\title{Tighter Confidence Intervals for Rating Systems}
\author{Robert Nowak, Ervin T\'anczos}

\maketitle

\begin{abstract}
  Rating systems are ubiquitous, with applications ranging from
  product recommendation to teaching evaluations. Confidence intervals
  for functionals of rating data such as empirical means or quantiles
  are critical to decision-making in various applications including
  recommendation/ranking algorithms.  Confidence intervals derived
  from standard Hoeffding and Bernstein bounds can be quite loose,
  especially in small sample regimes, since these bounds do not
  exploit the geometric structure of the probability simplex.  We
  propose a new approach to deriving confidence intervals that are
  tailored to the geometry associated with multi-star/value rating
  systems using a combination of techniques from information theory,
  including Kullback-Leibler, Sanov, and Csisz{\'a}r inequalities.
  The new confidence intervals are almost always as good or better
  than all standard methods and are significantly tighter in many
  situations.  The standard bounds can require several times more
  samples than our new bounds to achieve specified confidence interval
  widths. 
\end{abstract}

\section{Introduction}\label{sec:intro}

Multi-star/value rating systems are ubiquitous.  Ratings are used
extensively in applications ranging from recommender systems
\cite{adomavicius2007towards,kwon2008improving} to contests
\cite{tanczos2017kl} to teaching evaluations
\cite{cohen1981student,boysen2015uses}. Key decisions are made based
on comparing functionals of rating histograms such as means and
quantiles. Algorithms for ranking, multi-armed bandits, prefernce
learning, and A/B testing rely crucially on confidence intervals for
these functionals.  This paper develops new constructions for
confidence intervals for multistar rating systems that are often
considerably tighter than most of the known and commonly used
constructions, including Hoeffding,
Bernstein, and Bernoulli-KL bounds.  These are reviewed in Section~\ref{sec:contributions}.

Our main approach begins by considering the construction of confidence
sets in the probability simplex based on finite-sample versions of
Sanov's inequality \cite{cover2012elements} or polytopes formed by
intersecting confidence intervals for the marginal probabilities.
With large probability, these sets include all probability mass
functions that could have generated an observed set of ratings.  An
important aspect of these sets is that they automatically capture to
the intrinsic variability of the ratings.  For instance, if  all of
the ratings are 3 out of 5 stars, then the set is tightly packed in a
corner of the simplex and is effectively much smaller than if the
ratings were uniformly distributed over 1 to 5 stars.  The simplex
confidence sets can then be constrained based on the sort of
functional under consideration (e.g., mean or median).  These
constraints take the form of convex sets in the simplex.  Csisz{\'a}r
inequality \cite{csiszar1984sanov} provides a refinement of Sanov's
bound for such convex sets.

\begin{figure}
	\begin{center}
		\vspace{.2in}
		\includegraphics[width=0.3\textwidth]{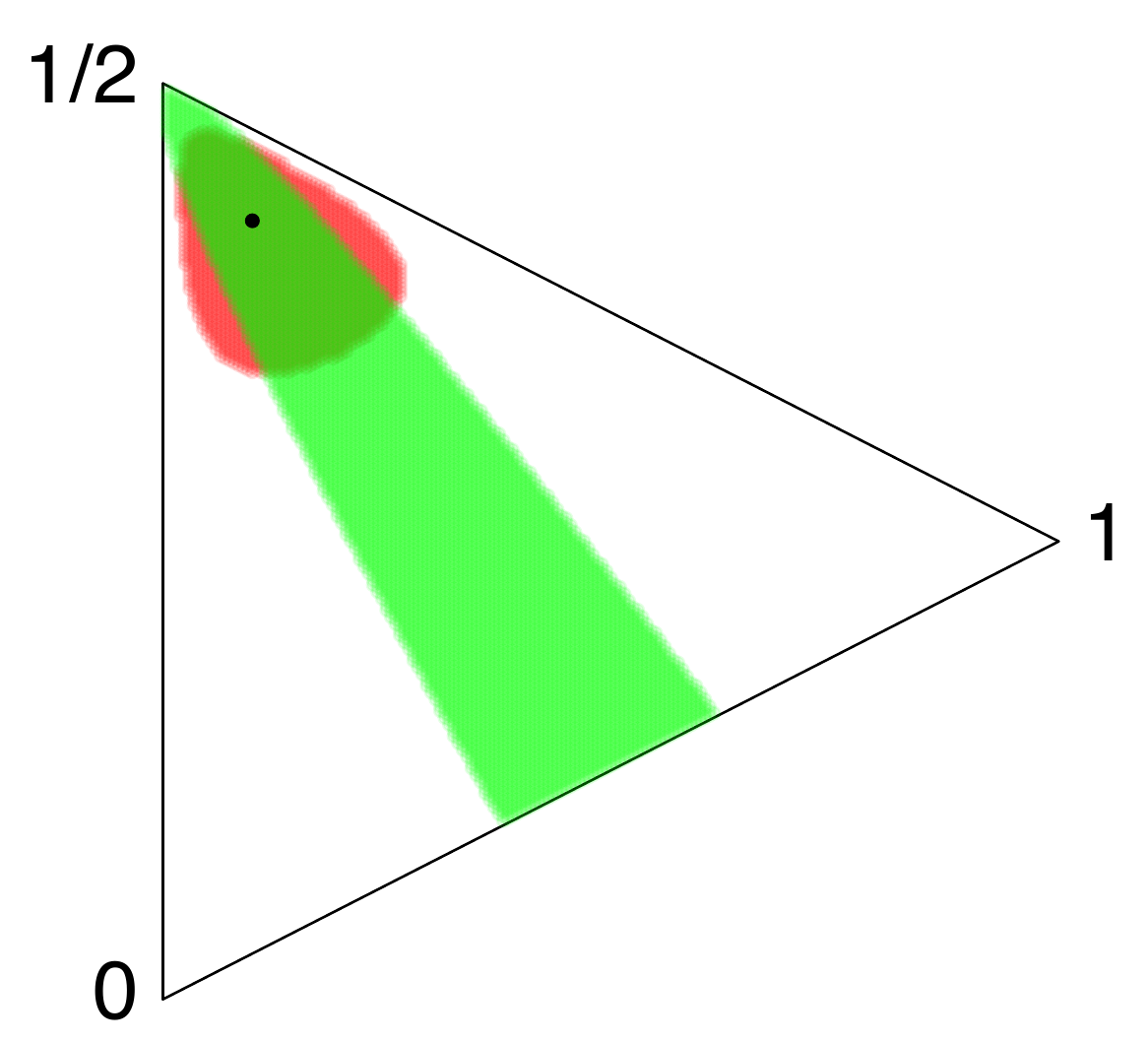}
	\end{center}
	 \caption{Confidence sets based on Sanov (red) and Csisz{\'a}r
           (green) inequalities. Black dot is the empirical
           distribution in this case.The intersection is the set of
           distributions that may have generated the data. \label{fig:simplex} }
\end{figure}

We illustrate how these regions look in the 3-dimensional simplex in
Figure~\ref{fig:simplex}.  Finding the maximum and minimum values for
the functional of interest within the intersection of the Sanov and
Csisz{\'a}r confidence sets yields a new confidence interval for
multistar ratings that is sharper than all common constructions in
almost all cases. Moreover, the new intervals can be easily computed
via optimization, as discussed in Section~\ref{sec:experiments}. A
representative example from a $5$-star rating application (details in
next section) is shown in Figure~\ref{fig:cartoon}.  The empirical
Bernstein (blue) and Bernoulli-KL (red) bounds are the best existing
bounds, but the former performs poorly in low sample regimes and the
latter performs poorly in large sample regimes.  The new bounds
(orange and purple) perform uniformly best over all sample sizes.

\begin{figure}[h]
\begin{center}
\centering
\includegraphics[width = 3in]{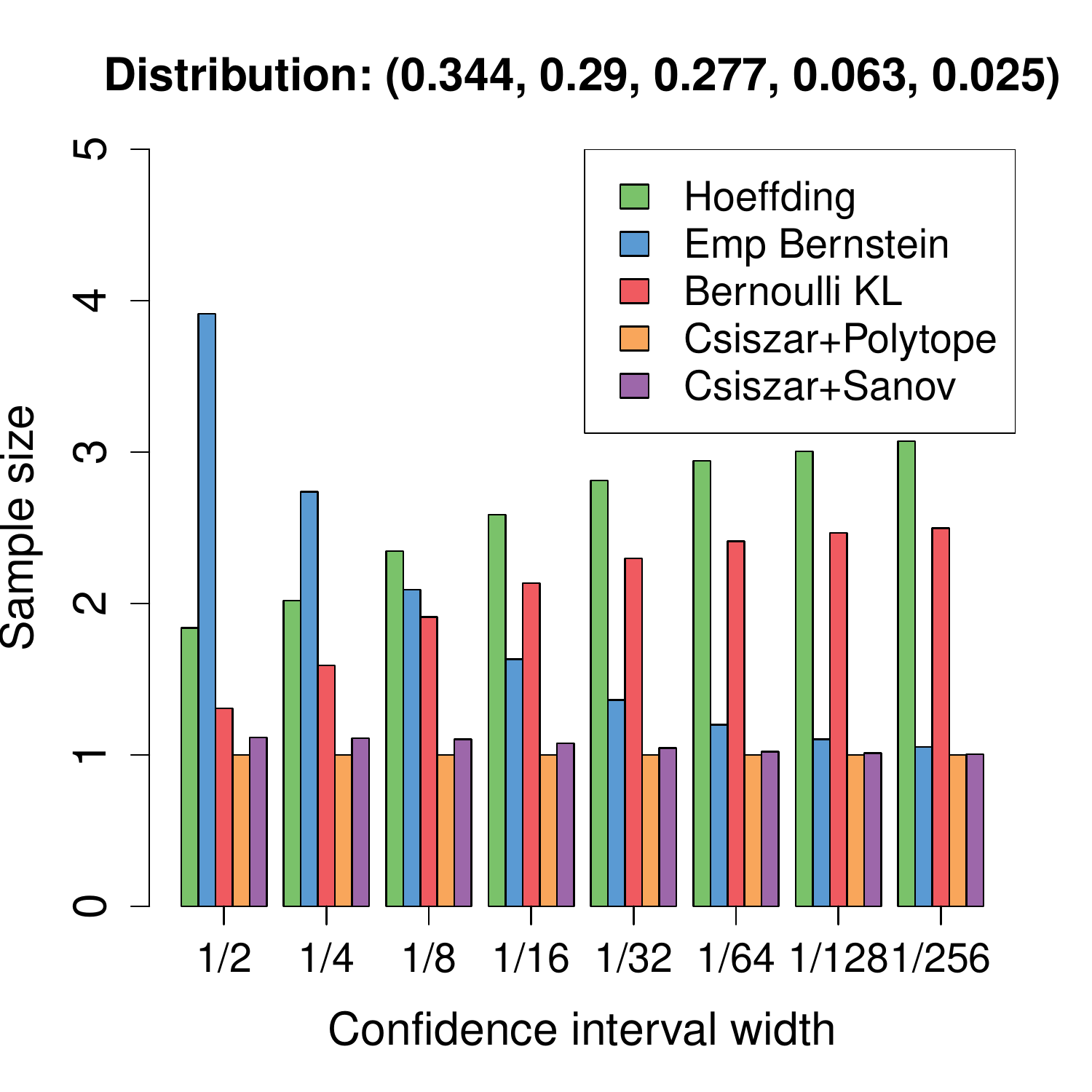}
\end{center}
\vspace{-.2in}
\caption{Comparison of sample sizes for specified confidence interval
  widths using different bounds.  The $1$ to $5$ star distribution
  $(.344, 0.29, 0.277, 0.063, 0.025)$ comes from a real-world contest rating
  dataset.  The sample sizes are normalized relative to the best, so
  the new bounds (best) shown in orange and purple bars are height
  $\approx 1$.  The empirical Bernstein bound (blue) requires about 4
  times more samples that our new bounds in the small sample (large
  interval width) regime.  The Bernoulli-KL bound (red) requires
  almost 3 times more samples in the large sample
  regime. \vspace{-.15in} \label{fig:cartoon}}
\end{figure}

\subsection{Motivating Examples}

Confidence intervals for
ratings are used in ranking applications like the Cartoon Collections
Caption Contest\footnote{www.cartooncollections.com}.  Each week,
contestants submit funny captions for a given cartoon image. Thousands
of captions are submitted, and Cartoon Collections uses crowdsourcing
to obtain hundreds of thousands of ratings for the submissions.  Captions are rated on a
$5$-star scale and ranked according to the average rating each
receives.  The crowdsourcing system uses multi-armed bandit
algorithms
based on confidence intervals to adaptively focus the rating process toward
the funniest captions, yielding a highly accurate ranking of the top
captions.  Better confidence intervals, like the Bernoulli-KL bound,
can significantly improve the accuracy of the ranking, as demonstrated
in \cite{tanczos2017kl}.  The new confidence intervals developed in
this paper offer even greater potential for improvements.  For
example, in a recent contest\footnote{Data courtesy of Cartoon
  Collections.} one caption had the following histogram of
$1$ to $5$ star ratings $(365,308,294,67,27)$. This distribution is quite typical
in this application.  We use this distribution to simulate the rating
process at different sample sizes. Figure~\ref{fig:cartoon} examines
the (normalized) sample
sizes required to achieve confidence intervals of various widths based
on the different bounds. In general, the number of
samples required for an interval of width $W$ scales roughly like $W^{-2}$, and so
we compare the relative number of samples needed by the different
methods. The new bounds developed in this paper, called
Csisz{\'a}r-Polytope and Csisz{\'a}r-Sanov, perform best over all
sample sizes and require $2$-$4$ times fewer ratings than standard bounds
in many cases.

As a second example, consider the two shoes and Amazon ratings shown in
Figure~\ref{fig:shoes}.  The shoe with fewer total ratings has a
slightly higher average rating.  Is the difference in average ratings
statistically significant?  To decide, we need to construct
confidence intervals for the means based on the observed ratings. If
the confidence intervals overlap, then the difference is not
statistically significant. Our desired level of confidence will be
expressed as $1-\delta$, and for the purposes of this example we set
$\delta=0.1$.

\begin{figure}
\begin{center}
\centering
\includegraphics[width=0.5\textwidth]{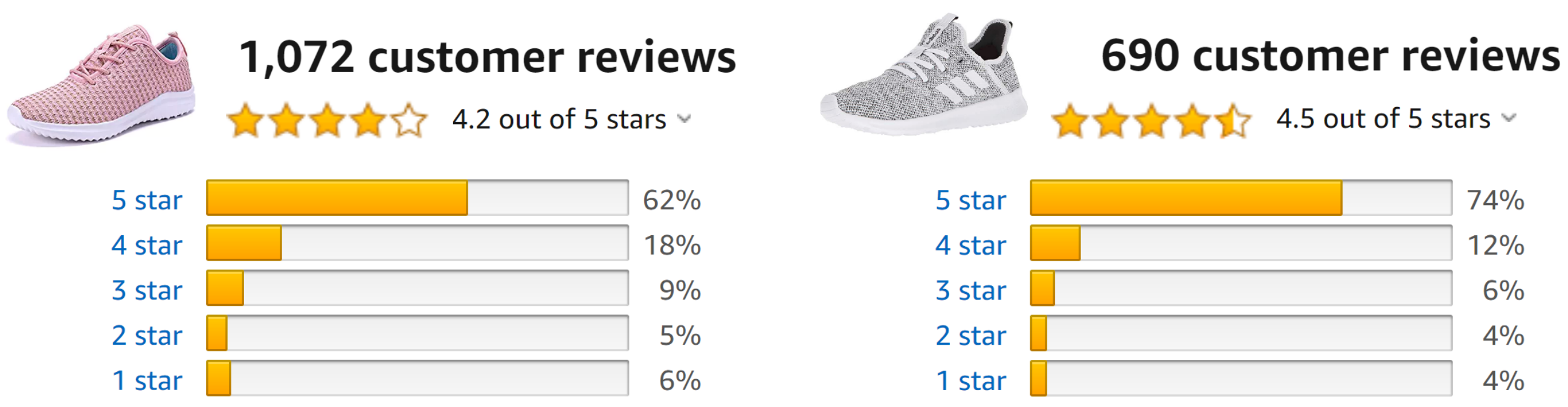}
\end{center}
\vspace{.1in}
\caption{Two shoes with Amazon ratings.\label{fig:shoes}}
\end{figure}

The simplest type of bound is the Hoeffding bound.  This
results in confidence intervals of $[4.10,4.40]$ and $[4.29,4.67]$,
respectively. The Bernoulli-KL bound \cite{garivier2011kl} provides
sharper bounds and yields the intervals $[4.13,4.36]$ and
$[4.34,4.60]$, respectively.  So we can not clearly conclude that Shoe
2 is better than Shoe 1.  In fact, if the observed rating
distributions were the true ones, and assuming equal samples for both
shoes, we would require roughly $1250$ samples per shoe using the
Bernoulli-KL bound. Another
option is to employ empirical Bernstein bounds
\cite{maurer2009empirical}, leading to intervals $[4.12,4.38]$ and
$[4.32,4.63]$.  Again, we can't decide which shoe is better. To do so
would require roughly $1400$ samples per shoe. However, our new bounds
provide the intervals $[4.14,4.35]$ and $[4.36,4.59]$, allowing us to
conclude that with probability at least $0.90$ the true mean rating
for Shoe 2 is larger.  In this case, were the observed rating
distributions true, confidence interval separation would occur at
about $900$ samples per shoe.  So in order to determine that Shoe 2 is
statistically better, the Bernoulli-KL and empirical Bernstein bounds
require about 40\% and 55\% more ratings than our new bounds.  In
extensive experiments in Section~\ref{sec:experiments}, we demonstrate
that all the standard bounds can require many times more samples
than our new bounds to achieve specified confidence interval widths.

\subsection{Related Work}

Since multistar ratings are bounded, standard Hoeffding
bounds can be used to derive confidence intervals.  These bounds do
not account for the bounded and discrete nature of multistar ratings,
nor do they adapt to the intrinsic variance of ratings.  Empirical
versions of Bernstein's inequality
\cite{mnih2008empirical,maurer2009empirical,peel2010empirical,audibert2009exploration,balsubramani2016sequential}
can be used to automatically adapt to the variance of the empirical
process, but as we show these bounds are extremely loose in small
sample regimes. For binary-valued (two-star) ratings, the best known
bounds are based on the Kullback-Leibler (KL) divergence
\cite{garivier2011kl}. Bernoulli-KL bounds automatically adapt to the
variance of binary processes and provide significantly tighter
confidence bounds than standard Hoeffding or Bernstein bounds.  All these
bounds are reviewed in Section~\ref{sec:contributions}.

The Bernoulli-KL bound can be applied to bounded ratings by mapping
the range into $[0,1]$.  These bounds have been shown theoretically and
empirically significantly improve the performance of multi-armed
bandit algorithms \cite{tanczos2017kl}.
However, KL bounds are not well suited to general multistar rating
processes and we show that our new bounds can provide significant
improvements over naive reductions to the Bernoulli KL-type
bounds. Confidence intervals for quantiles are used in many
applications.  For example, \cite{szorenyi2015qualitative} considered
quantile-based multi-armed bandit algorithms and used the
Dvoretzky-Kiefer-Wolfowitz inequality to derive quantile
confidence intervals.  We show that for quantiles other than the
median, our new bounds can yield tighter intervals.

The paper is organized as follows. We set up the problem and review
existing results in Section~\ref{sec:setup}. We define our proposed
confidence bounds tailored for multistar random variables in
Section~\ref{sec:results}, and analyze their accuracy and asymptotic
performance. We also take a moment to review existing methods for
inference about quantiles, given the similarities between those and
the method we propose. Methods for computing the new confidence
intervals and performance comparisons with other confidence intervals
are the focus of
Section~\ref{sec:experiments}. We provide concluding remarks in
Section~\ref{sec:conclusion}.


\section{Problem setup}\label{sec:setup}

Let $\cS_k := \{ p_1,\dots ,p_k :\ p_i>0\ \forall i,\ \sum p_i =1 \}$
denote the probability simplex in $k$-dimensions. Let
$\cF:\ \cS_k \to [0,1]$ be a bounded linear functional mapping from
the probability simplex to $[0,1]$\footnote{We can rescale any bounded
  functional to the interval $[0,1]$.}.  The main focus of
this work is to obtain tight confidence bounds for the value
$\cF (\P ), \P \in \cS_k$, based on an i.i.d. sample
$X_1,\dots ,X_n \sim \P$. We denote the empirical distribution based
on $n$ i.i.d. samples by $\hat{\P}_n$.

If $\cF$ is linear then $\cF (\P ) = \sum_{i\in [k]} p_i w_i$ for any
$\P \in \cS_k$, where $w_i \in \R$ are given weights and $[k]$ denotes
the set $\{1,\dots,k\}$. Furthermore we
can assume w.l.o.g.\ that $\cF=\sum_{i\in [k]} w_i p_i$ with $w_1=0$
and $w_k=1$.  Thus the problem of estimating the value of a linear
functional $\cF (\P )$ is equivalent to estimating the mean of the
random variable $\xi$ defined as $\P (\xi = w_i)=p_i$. In the
discussions that follow, it will be useful to keep both
interpretations of the problem in mind.
Finally, we will also consider cases when $\cF(\P)$ is a quantile, due
to its practical relevance and its similarity to linear functionals.


\subsection{Contributions}\label{sec:contributions}

The most commonly used concentration bounds for the mean of variables
bounded in $[0,1]$ are Hoeffding's inequality, Bernstein's inequality
and the Bernoulli-KL bound (see equation \eqref{eqn:bernoulli} below
and \cite{Concentration_2013} for details). The key difference between
these bounds is the variance information they use. It is
straightforward to see that for a random variable $X\in [0,1]$, the
variance can be upper bounded as follows
\[
\Var (X) = \E (X^2)-\E^2 (X) \leq \E (X) (1-\E (X)) \ .
\]

The Bernoulli-KL bound essentially uses the upper bound above, whereas Hoeffding's inequality further upper bounds the right hand side of the display above by $1/4$. Hence the Bernoulli-KL bound will always be stronger than Hoeffding's bound. However, the variance of $X$ can be smaller than the bound above and Bernstein's inequality explicitly uses this variance information. Therefore it will be tighter then the Binary-KL bound when the variance is indeed smaller.

However, in practice one does not know the variance, and instead has to estimate it from the sample. This gives rise to the empirical Bernstein inequality \cite{maurer2009empirical}, which states that with probability $\geq 1-\delta$
\begin{equation}\label{eqn:emp_bern}
\overline{X}_n - \E (X) \leq \sqrt{\frac{2 \Var_n (X) \log (2/\delta )}{n}} + \frac{7\log (2/\delta )}{3(n-1)} \ ,
\end{equation}
where $\overline{X}_n$ is the empirical mean and $\Var_n (X)$ is the empirical variance.
Asymptotically, the inequality roughly says
\[
\overline{X}_n - \E (X) \leq \sqrt{\frac{2 \Var (X) \log (2/\delta )}{n}} \ ,
\]
or in other words
\begin{equation}\label{eqn:bern}
\P \left( \overline{X}_n - \E (X) >\epsilon \right) \leq \exp \left( -n \frac{2 \epsilon^2}{\Var (X)} \right) \ .
\end{equation}
This is the best exponent we can hope
 for in the limit, since the Central Limit Theorem results in the same
 exponent in the limit as $n\rightarrow \infty$.

Although \eqref{eqn:emp_bern} has good asymptotic performance, its small sample performance is poor. For small $n$ the second term dominates the right hand side of \eqref{eqn:emp_bern}, with the bound often becoming larger than 1, making the inequality vacuous. This property is often undesirable in practice, for instance in the context of bandit algorithms this can lead to wasting a large amount of samples on sub-optimal choices in the early stages of the algorithm.
To overcome this drawback, we propose a confidence bound building on the works \cite{sanov1958probability} and \cite{csiszar1984sanov}. Using their results we construct a confidence region in the probability simplex that contains the true distribution $\P$ with high-probability. Being specialized for distributions on $k$-letter alphabets, these bounds automatically adapt to both the variance and the geometry of the probability simplex. Taking the extreme values of the means of distributions within the confidence region yield the desired confidence bounds for the mean.


\section{Results}\label{sec:results}

In this section we present possible ways of constructing confidence
sets in $\cS_k$ for an unknown distribution $\P$ based on the
empirical distribution $\hat{\P}_n$. For each method we review the
information-theoretic inequality used and describe in detail how it
leads to a confidence set in the simplex $\cS_k$.
The confidence sets presented in Sections~\ref{sec:sanov} and \ref{sec:polygon} are not designed with any specific functional in mind. In Section~\ref{sec:linear} we tailor these regions to specifically work well when $\cF$ is linear. Finally, we also briefly mention the case when $\cF$ is a quantile, and how it relates to the case of linear functionals in Section~\ref{sec:quantiles}.


\subsection{The Sanov-ball}\label{sec:sanov}

Sanov's theorem \cite{cover2012elements} is a natural choice to construct a confidence region for $\P$.
\begin{theorem}[Theorem 11.4.1 of \cite{cover2012elements}]\label{thm:Sanov}
Let $E$ be any subset of the probability simplex  $\cS_k$. Then
\[
\P (\hat{\P}_n \in E)\leq {n+k-1 \choose k-1} \exp \left( -n\inf_{\Q \in E} \KL (\Q ,\P ) \right) \ .
\]
\end{theorem}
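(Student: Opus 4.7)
The plan is to prove the theorem via the classical method of types, which is the standard route for Sanov-type bounds on a finite alphabet $[k]$.

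First I would define the set of empirical types $\mathcal{P}_n \subset \cS_k$, i.e.\ those distributions $Q$ such that $nQ_i \in \mathbb{N}$ for all $i$. A stars-and-bars count gives $|\mathcal{P}_n| \leq \binom{n+k-1}{k-1}$, which accounts for the combinatorial prefactor in the statement. For each type $Q \in \mathcal{P}_n$, let $T(Q)$ denote the type class, i.e.\ the set of sequences $x_1,\dots,x_n \in [k]^n$ whose empirical distribution equals $Q$.

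Second, I would establish the key single-type estimate
\[
\P\bigl(\hat{\P}_n = Q\bigr) \;=\; \P(T(Q)) \;\leq\; \exp\bigl(-n\,\KL(Q,\P)\bigr).
\]
This has two ingredients. On the one hand, every sequence in $T(Q)$ has probability exactly $\prod_i p_i^{nQ_i} = \exp(-n(H(Q)+\KL(Q,\P)))$ under $\P$, by grouping the product by letter. On the other hand, the multinomial coefficient $|T(Q)| = \binom{n}{nQ_1,\dots,nQ_k}$ satisfies $|T(Q)| \leq \exp(nH(Q))$; this is the standard type-class size bound and follows from writing $|T(Q)|/n^n$ as a ratio of factorials and applying elementary inequalities (or, equivalently, from the fact that the multinomial distribution with parameters $Q$ assigns mass $\geq |T(Q)|\exp(-nH(Q))$-worth of total probability and total probability is $\leq 1$). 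Multiplying the per-sequence probability by the size bound yields the claimed estimate.

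Finally, I would assemble the result by decomposing $E \cap \mathcal{P}_n$ into types and union-bounding:
\[
\P(\hat{\P}_n \in E) \;=\; \sum_{Q \in E \cap \mathcal{P}_n} \P(T(Q)) \;\leq\; |\mathcal{P}_n|\, \max_{Q \in E \cap \mathcal{P}_n} \exp(-n\,\KL(Q,\P)).
\]
Since $E \cap \mathcal{P}_n \subseteq E$, the maximum over types is bounded by $\exp(-n \inf_{Q \in E}\KL(Q,\P))$, and combining with the stars-and-bars count on $|\mathcal{P}_n|$ gives exactly the bound in the theorem. The only nontrivial step is the type-class size estimate $|T(Q)| \leq \exp(nH(Q))$, so that is where I would be most careful; everything else is bookkeeping of products and a union bound. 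Note the argument implicitly treats $\KL(Q,\P)=\infty$ when $Q$ is not absolutely continuous with respect to $\P$, which is consistent since such types have $\P(T(Q))=0$ anyway.
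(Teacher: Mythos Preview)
Your proposal is correct and is precisely the standard method-of-types argument from Cover and Thomas, which is the reference the paper cites for this theorem; the paper itself does not supply a proof of this statement, so there is nothing further to compare.
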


We can re-write this result as
\[
\P \left( \KL (\hat{\P}_n ,\P) >z \right) \leq {n+k-1 \choose k-1} e^{-nz} \ ,
\]
which leads to the confidence region
\[
\left\{ \Q :\ \KL (\hat{\P},\Q ) \leq \frac{\log \left( {n+k-1 \choose k-1}/\delta \right)}{n} \right\} \ .
\]
Next, consider an improvement of Sanov's Theorem.
\begin{theorem}\cite{improved_sanov}\label{thm:improved_sanov}
For all $k,n$ 
\begin{align*}
\P & \left( \KL (\hat{\P}_n,\P )>z \right) \\
& \leq \min \Bigg\{ \frac{6e}{\pi^{3/2}} \left( 1+ \sum_{i=1}^{k-2} \Big(\sqrt{\frac{e^3 n}{2\pi i}}\Big)^i \right) e^{-nz}, \\
& \quad 2(k-1)e^{-nz/(k-1)} \Bigg\}
\end{align*}
\end{theorem}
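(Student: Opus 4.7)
The plan is to establish the two bounds in the $\min$ separately by different techniques, and then take their minimum.

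\textbf{Refined method of types for the first bound.} I would start from the standard decomposition used in the proof of Theorem~\ref{thm:Sanov}:
\[
\P\bigl(\KL(\hat{\P}_n,\P)>z\bigr) \;=\; \sum_{Q\in\cQ_n :\ \KL(Q,\P)>z} \P(T(Q)) \;\leq\; e^{-nz}\,\bigl|\{Q\in\cQ_n : \KL(Q,\P)>z\}\bigr|,
\]
where $\cQ_n$ denotes the set of $n$-types and $T(Q)$ the associated type class, using the classical method-of-types estimate $\P(T(Q))\leq e^{-n\KL(Q,\P)}$. The prefactor appearing in Theorem~\ref{thm:Sanov} is obtained by the crude bound $|\cQ_n|\leq \binom{n+k-1}{k-1}$, which is wasteful since it counts all types rather than only those with $\KL(Q,\P)>z$. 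The refinement stratifies types by the number $i$ of coordinates in which $Q$ meaningfully deviates from $\P$, reducing the count in each stratum to an $i$-dimensional lattice problem; a careful Stirling estimate yields roughly $(e^3 n/(2\pi i))^{i/2}$ types in the $i$-th stratum, and summing over $i=0,1,\ldots,k-2$ with a universal constant $6e/\pi^{3/2}$ absorbing the remaining factors produces the first bound.

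\textbf{Weak-exponent bound via binary reduction.} For $2(k-1)e^{-nz/(k-1)}$, I would reduce to the binary case by expressing (or bounding) the multinomial KL as a sum of at most $k-1$ binary KL divergences. A natural device is the chain rule for KL along a sequential binary decomposition of the alphabet $[k]$: test $X=1$ vs.\ $X\neq 1$, then $X=2$ vs.\ $X\in\{3,\ldots,k\}$ given $X\neq 1$, and so on, writing $\KL(\hat{\P}_n,\P) = \sum_{j=1}^{k-1} W_j\,\KL(\Ber(\tilde p_j),\Ber(\tilde q_j))$ with weights $W_j\in[0,1]$. Then $\KL(\hat{\P}_n,\P)>z$ forces at least one binary component to exceed $z/(k-1)$; since each binary component satisfies the sharp Sanov bound $\P(\KL(\hat p,p)>z')\leq 2 e^{-nz'}$ (the $k=2$ case of Theorem~\ref{thm:Sanov}), a union bound over the $k-1$ tests yields the claim.

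\textbf{Main obstacle.} The principal difficulty is obtaining the explicit constants $6e/\pi^{3/2}$ and $e^3/(2\pi)$ in the first bound with nonasymptotic Stirling-type estimates that are valid for \emph{all} $n$ and $k$, and in dealing with the boundary of the simplex where some $q_i$ is small and the local quadratic approximation to $\KL$ degrades; this region likely requires either truncation or a direct small-mass estimate. For the second bound, the subtle point is choosing a binary decomposition so that (i) each binary concentration genuinely enjoys the $2 e^{-nz'}$ rate in the full sample size $n$ (rather than a random subsample size that would weaken the exponent), and (ii) the deterministic inequality relating the multinomial KL to the maximum of the binary KLs holds uniformly over $(\hat{\P}_n,\P)$.
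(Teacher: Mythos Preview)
The paper does not prove Theorem~\ref{thm:improved_sanov}; it is quoted from \cite{improved_sanov} and used only as a black box to set the radius of the Sanov confidence region~\eqref{eqn:sanov_region}. There is therefore no proof in this paper to compare your proposal against.

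On the proposal itself: the stratified type count for the first bound is the natural route, and you are right that all of the labor sits in obtaining the explicit constants uniformly in $n$ and $k$. For the second bound, the obstacle you flag actually dissolves. In the sequential chain-rule decomposition the weight is exactly $W_j=N_j/n$, where $N_j$ is the (random) number of samples landing in $\{j,\ldots,k\}$. If the total KL exceeds $z$ then some term satisfies $W_j\,\KL_j>z/(k-1)$, i.e.\ $\KL_j>nz/\bigl((k-1)N_j\bigr)$. Conditionally on $N_j$, the $j$-th binary empirical proportion is built from $N_j$ i.i.d.\ $\Ber(\tilde q_j)$ trials, so the Bernoulli-KL inequality~\eqref{eqn:bernoulli} gives
\[
\P\Bigl(\KL_j>\tfrac{nz}{(k-1)N_j}\,\Big|\,N_j\Bigr)\;\leq\;2\exp\!\Bigl(-N_j\cdot \tfrac{nz}{(k-1)N_j}\Bigr)\;=\;2e^{-nz/(k-1)},
\]
and the random sample size cancels exactly against the weight. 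A union bound over $j=1,\ldots,k-1$ then yields $2(k-1)e^{-nz/(k-1)}$, so your sketch for the second term can be completed without the difficulty you anticipated. (Minor correction: the bound $2e^{-nz'}$ you invoke is inequality~\eqref{eqn:bernoulli}, not the $k=2$ case of Theorem~\ref{thm:Sanov}, which carries a prefactor $n+1$ rather than $2$.)
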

Generally speaking, the first term in the bound is smaller than the second \footnote{In particular, it can be shown that the second term is better whenever $k\leq \sqrt[3]{\tfrac{e^3}{8\pi} n}$, see \cite{improved_sanov}.} when the sample size $n$ is on the same order or lower than the alphabet size $k$. Since in this work we are primarily concerned with situations when the alphabet size is relatively small, we use the second term in the inequality above. This leads to the confidence region
\begin{equation}\label{eqn:sanov_region}
\C_{\rm Sanov} := \left\{ \Q : \KL (\hat{\P},\Q ) \leq \frac{(k-1) \log \left( \frac{2(k-1)}{\delta} \right)}{n} \right\} \ .
\end{equation}
Roughly speaking, this improves a $\log n$ factor to a $\log k$ factor in the cutoff threshold, compared to the one we would get using Theorem~\ref{thm:Sanov}.


\subsection{Confidence Polytope}\label{sec:polygon}

Another simple approach is to construct confidence bounds for the
marginal probabilities $p_j ,\ j\in [k]$ and combine them with a union
bound. For each $j$ let $\hat{p}_j$ denote the empirical frequency of
$j$.  Since  $n\hat{p}_j$ is the sum of independent $\Ber (p_i)$ samples, we
can use the Bernoulli-KL inequality \cite{garivier2011kl}
\begin{equation}\label{eqn:bernoulli}
\P (\KL (\hat{p}_j ,p_j)>z)\leq 2\exp (-nz) \ .
\end{equation}
This leads to the confidence-polytope
\begin{equation}\label{eqn:polygon_region}
\C_{\rm Polytope} := \left\{ \Q : \KL (\hat{p}_j ,q_j ) \leq
  \frac{\log (2k/\delta )}{n} \, , \forall j\in [k] \right\} \ .
\end{equation}

Note that it is not true in general that $\C_{\rm Sanov}$ contains
$\C_{\rm Polytope}$ or vice-versa, and in fact most often neither one
is contained in the other. For one, these sets have different
geometries.
Furthermore, the Bernoulli-KL inequality (and $\C_{\rm Polytope}$ as a consequence) is essentially unimprovable, but there still might be room for improvement in \eqref{thm:improved_sanov} (see the discussion in \cite{improved_sanov}). Therefore, which confidence region performs better depends on the functional $\cF$ and the true distribution $\P$.

That being said, in all numerical experiments presented in Section~\ref{sec:experiments} the bounds derived from $\C_{\rm Polytope}$ consistently beat those derived from $\C_{\rm Sanov}$.


\subsection{Linear functionals}\label{sec:linear}

The main tool we use to construct confidence regions when $\cF$ is linear is Csisz\'ar's theorem \cite{csiszar1984sanov}\footnote{For sake of completeness we include the proof of this theorem in the Supplementary Material.}:
\begin{theorem}
\label{thm:csiszar}
If $E$ is a convex subset of the probability simplex, then
\[
\P (\hat{\P}_n \in E)\leq \exp \left( -n\inf_{\Q \in E} \KL (\Q ,\P ) \right) \ .
\]
\end{theorem}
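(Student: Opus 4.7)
The plan is to bypass the polynomial type-counting factor of Sanov's theorem (Theorem~\ref{thm:Sanov}) by a change-of-measure argument to the \emph{$I$-projection} of $\P$ onto $E$, combined with a Pythagorean-style inequality that crucially uses convexity. Let $\Q^{*} := \arg\min_{\Q \in E}\KL(\Q,\P)$ denote the $I$-projection; existence is obtained by replacing $E$ with its closure in $\cS_k$ and appealing to lower semicontinuity of $\KL(\cdot,\P)$.

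The main step, and the one I expect to be the principal obstacle, is establishing the Pythagorean inequality
$$
\KL(\Q,\P)\ \geq\ \KL(\Q,\Q^{*}) + \KL(\Q^{*},\P)\qquad \text{for every } \Q\in E.
$$
The idea is variational. By convexity of $E$, the segment $\Q_\lambda := (1-\lambda)\Q^{*}+\lambda\Q$ lies in $E$ for every $\lambda\in[0,1]$, so the scalar function $\lambda\mapsto \KL(\Q_\lambda,\P)$ is minimized at $\lambda=0$ and has nonnegative right derivative there. A direct computation of that derivative (using $\sum_j(q_j-q_j^{*})=0$ to eliminate the constant term) simplifies exactly to $\KL(\Q,\P)-\KL(\Q,\Q^{*})-\KL(\Q^{*},\P)$, which must therefore be nonnegative. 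The care required is handling the relative boundary: some coordinates of $\Q^{*}$ may vanish, so the derivative must be interpreted as a one-sided derivative with the usual $0\log 0=0$ convention, and one must verify that the directional derivative remains a valid lower bound in that case.

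Once the Pythagorean inequality is in hand, the rest is mechanical change of measure. For any sample $x_1,\dots,x_n$ with empirical type $\hat{\P}_n$,
$$
\frac{d\P^{\otimes n}}{d(\Q^{*})^{\otimes n}}(x_1,\dots,x_n)\ =\ \exp\!\Big(n\sum_{j}\hat p_j\log\tfrac{p_j}{q^{*}_j}\Big)\ =\ \exp\!\big(n(\KL(\hat{\P}_n,\Q^{*})-\KL(\hat{\P}_n,\P))\big).
$$
Applying the Pythagorean inequality with $\Q=\hat{\P}_n$ on the event $\{\hat{\P}_n\in E\}$ shows the exponent is at most $-n\,\KL(\Q^{*},\P)$, so the likelihood ratio is uniformly bounded by $\exp(-n\KL(\Q^{*},\P))$ on that event.

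Combining,
$$
\P(\hat{\P}_n\in E)\ =\ \E_{\Q^{*}}\!\left[\frac{d\P^{\otimes n}}{d(\Q^{*})^{\otimes n}}\,\mathbf{1}_{\hat{\P}_n\in E}\right]\ \leq\ e^{-n\KL(\Q^{*},\P)}\cdot \P_{\Q^{*}}(\hat{\P}_n\in E)\ \leq\ \exp\!\Big(-n\inf_{\Q\in E}\KL(\Q,\P)\Big),
$$
which is the desired bound. The gain over Sanov's theorem, namely the elimination of the $\binom{n+k-1}{k-1}$ prefactor, comes entirely from the fact that convexity turns the type-class bound (which would only give a bound of $\exp(-n\KL(\Q,\P))$ per type) into a bound where the worst-case likelihood ratio itself is controlled by $\KL(\Q^{*},\P)$, so no union bound over types is needed.
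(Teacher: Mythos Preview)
Your proof is correct, but it follows a genuinely different route from the paper's. You use the $I$-projection $\Q^{*}$ together with Csisz\'ar's Pythagorean inequality and then a change of measure from $\P^{\otimes n}$ to $(\Q^{*})^{\otimes n}$; the paper instead works with the conditional sequence distribution $\P_E(x)=\mathbf{1}\{T_x\in E\}\P(x)/\P(E)$ and its \emph{mean type} $\overline{\P}(j)=\sum_x \P_E(x)\,T_x(j)$, and shows algebraically that $\P(E)=\exp\big(-n\,\KL(\overline{\P},\P)-\KL(\P_E,\overline{\P}^{\otimes n})\big)\le \exp\big(-n\,\KL(\overline{\P},\P)\big)$, after which convexity of $E$ gives $\overline{\P}\in E$ because $\overline{\P}$ is a convex combination of types lying in $E$.

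In terms of trade-offs: the paper's argument is more elementary in that it never needs to assert existence of the $I$-projection or prove the Pythagorean inequality, and it automatically sidesteps the relative-boundary issues you flag (no coordinate of $\Q^{*}$ needs to be positive because $\Q^{*}$ never appears). Your argument is more geometric and identifies the extremal distribution explicitly, which is conceptually illuminating and connects directly to the information-geometry picture; it also makes transparent why no union bound over types is needed. One small point worth tightening in your write-up: the change of measure from $\P^{\otimes n}$ to $(\Q^{*})^{\otimes n}$ is valid on the event $\{\hat{\P}_n\in E\}$ precisely because, as a byproduct of the minimality of $\Q^{*}$ (via the $-\infty$ one-sided derivative you mention), every $\Q\in E$ with $\KL(\Q,\P)<\infty$ satisfies $\supp(\Q)\subseteq\supp(\Q^{*})$, so the likelihood ratio is finite on the relevant samples. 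You allude to this, but it deserves an explicit sentence since it is what makes the final expectation identity legitimate.
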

This theorem can be viewed as a sharpening of Sanov's theorem for convex sets, or as a generalization of the Bernoulli-KL inequality (as we illustrate in Proposition~\ref{prop:geometry} below).
Denote the level sets of the functional $\cF$ by
\[
\cR_m := \left\{ \Q :\ \cF (\Q ) = m \right\} \ ,
\]
and let
\[
B (\Q ,z) = \{ \Q' :\ \KL (\Q' ,\Q )<z \}
\]
denote the KL-ball of radius $z$ around distribution $\Q$. With this we can define the confidence region
\[
\C_{\cF}(\hat{\P}_n,z) = \left\{ \Q :\ \cR_{\cF (\hat{\P}_n)} \cap B(\Q ,z) \neq \emptyset \right\} \ .
\]
We have the following guarantee for this confidence region:
\begin{proposition}\label{prop:linear}
If $\hat{\P}_n$ is the empirical distribution of an i.i.d.\ sample coming from true distribution $\P$, then
\[
\P \left( \P \notin \C_{\cF}(\hat{\P}_n,z) \right) \leq 2 e^{-nz} \ .
\]
\end{proposition}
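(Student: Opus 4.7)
The plan is to reduce the bad event $\{\P \notin \C_{\cF}(\hat{\P}_n,z)\}$ to two one-sided tail events for the scalar statistic $\cF(\hat{\P}_n)$, each of which is of the form $\hat{\P}_n \in E$ for a convex set $E\subset \cS_k$, and then apply Csisz\'ar's theorem (Theorem~\ref{thm:csiszar}) to each and union-bound.

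First, I would unpack the definition. Writing $\hat{m}=\cF(\hat{\P}_n)$, the condition $\P\notin \C_{\cF}(\hat{\P}_n,z)$ is equivalent to $\cR_{\hat m}\cap B(\P,z)=\emptyset$, i.e.\ $\inf_{\Q':\cF(\Q')=\hat m}\KL(\Q',\P)\geq z$. Introduce the profile function
\[
f(m'):=\inf_{\Q':\,\cF(\Q')=m'}\KL(\Q',\P).
\]
Because $\cF$ is linear and $\KL(\cdot,\P)$ is convex, $f$ is convex on its domain; and clearly $f(m)=0$ at $m:=\cF(\P)$ (take $\Q'=\P$), so $f$ is nondecreasing on $[m,\infty)$ and nonincreasing on $(-\infty,m]$. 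Define $m_+:=\inf\{m'\geq m:f(m')\geq z\}$ and $m_-:=\sup\{m'\leq m:f(m')\geq z\}$. Then $f(\hat m)\geq z$ forces either $\hat m\geq m_+$ or $\hat m\leq m_-$.

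Next I would apply Csisz\'ar. The sets $E^+:=\{\Q\in\cS_k:\cF(\Q)\geq m_+\}$ and $E^-:=\{\Q\in\cS_k:\cF(\Q)\leq m_-\}$ are convex (each is a half-space intersected with the simplex, using linearity of $\cF$). Theorem~\ref{thm:csiszar} gives
\[
\P(\hat{\P}_n\in E^+)\leq \exp\!\Bigl(-n\inf_{\Q\in E^+}\KL(\Q,\P)\Bigr),
\]
and the crucial check is that this infimum is at least $z$: for any $\Q\in E^+$ we have $\KL(\Q,\P)\geq f(\cF(\Q))\geq f(m_+)\geq z$, where the first inequality is the definition of $f$, the second uses monotonicity of $f$ on $[m,\infty)$, and the third is the definition of $m_+$. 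The identical argument works for $E^-$. A union bound then yields
\[
\P(\P\notin \C_{\cF}(\hat{\P}_n,z))\leq \P(\hat{\P}_n\in E^+)+\P(\hat{\P}_n\in E^-)\leq 2e^{-nz}.
\]

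The main subtlety I expect is the reduction from the level-set formulation (equality $\cF(\Q')=\hat m$) to a half-space formulation so that Csisz\'ar applies; the profile function $f$ and its monotonicity on each side of $m$ do exactly this. A minor technicality is handling strict vs.\ non-strict inequalities in $B(\Q,z)$ and in the definition of $m_\pm$, which is resolved by lower semicontinuity of $f$ (the infimum is attained on the compact simplex). Everything else is a clean application of Theorem~\ref{thm:csiszar} with a union bound.
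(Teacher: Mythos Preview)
Your proof is correct and follows essentially the same route as the paper: reduce the failure event to the two one-sided events $\cF(\hat\P_n)$ too large or too small, observe that each is of the form $\hat\P_n\in E$ for a convex half-space $E$, check that $\inf_{\Q\in E}\KL(\Q,\P)\geq z$, and apply Theorem~\ref{thm:csiszar} with a union bound. The only cosmetic difference is that the paper defines the thresholds as $L=\min_{\Q\in B(\P,z)}\cF(\Q)$ and $U=\max_{\Q\in B(\P,z)}\cF(\Q)$ directly, whereas you reach the equivalent quantities $m_-,m_+$ via the profile function $f$; your version is slightly more explicit about why the half-space infimum is at least $z$.
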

\begin{proof}
By the definition, if $\P \notin \C_{\cF}(\hat{\P}_n,z)$ implies that
\[
\cR_{\cF (\hat{\P}_n)} \cap B(\P ,z) = \emptyset \ .
\]
This can be restated as
\[
\cF (\hat{\P}_n) \notin \left[ \min_{\Q \in B(\P ,z)} \cF (\Q ),\ \max_{\Q \in B(\P ,z)} \cF (\Q ) \right] \ .
\]
Using the notation $L= \min_{\Q \in B(\P ,z)} \cF (\Q )$ and $U=\max_{\Q \in B(\P ,z)} \cF (\Q )$ we have
\[
\P (\P \notin \C_{\cF} (\hat{\P}_n,z)) = \P \left( \hat{\P}_n \in \left( \bigcup_{z<L} \cR_z \right) \cup \left( \bigcup_{z>U} \cR_z \right) \right) \ .
\]
Note that both regions $\bigcup_{z<L} \cR_z$ and $\bigcup_{z>U} \cR_z$
are convex, since they are unions of `adjacent' hyperplanes.
Using a union bound and Theorem~\ref{thm:csiszar} concludes the proof.
\end{proof}
According to this result
\begin{equation}\label{eqn:linear_region}
\C_{\cF} := \C_{\cF}\left( \hat{\P}_n, \frac{\log (2/\delta )}{n} \right)
\end{equation}
contains $\P$ with probability $1-\delta$.
Note that $\C_{\cF}$ is the KL ``neighborhood'' of the level set
$\cR_{\cF (\hat{\P}_n)}$. As the next result shows, this neighborhood
is widest near the edge of the simplex connecting the corners $(1,0,\dots,0)$
and $(0,\dots,0,1)$ (also see
Firgure~\ref{fig:regions_in_simplex}). This is under the assumption
that the weights of $\cF$ are monotonically increasing (e.g., rating
values of $1$
to $k$ stars).
\begin{proposition}\label{prop:geometry}
Fix a $z>0$ and any $\hat{\P}_n$, and consider the set $\C_{\rm Linear}$. Define
\[
L=\min_{\Q \in \C_{\cF}} \cF (\Q )  \quad \textrm{and}\quad U=\max_{\Q \in \C_{\cF}} \cF (\Q ) \ .
\]
For any $\xi \in [0,1]$ consider the distributions $\P_\xi = (1-\xi,0,\dots ,0,\xi)$ that take value $w_1=0$ with probability $1-\xi$ and value $w_k=1$ with probability $\xi$.
Then the extreme values $L$ and $U$ are uniquely attained by the distributions $\P_L$ and $\P_U$.
\end{proposition}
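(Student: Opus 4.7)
My plan is to reduce everything to the two-point family $\{\P_\xi\}_{\xi\in[0,1]}$ via a data-processing argument. Writing $\hat m := \cF(\hat{\P}_n)$, I would introduce the randomized kernel $T\colon[k]\to\{0,1\}$ that sends $i$ to $1$ with probability $w_i$ and to $0$ with probability $1-w_i$; this is a valid kernel because $w_i\in[0,1]$. Under $T$, any $\Q\in\cS_k$ pushes forward to $\Ber(\cF(\Q))$, while $\P_\xi=(1-\xi,0,\dots,0,\xi)$ sits already on $\{1,k\}$ and, since $w_1=0$ and $w_k=1$, identifies with $\Ber(\xi)$; in particular $\KL(\P_\eta,\P_\xi)$ is just the usual binary KL divergence. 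Data processing applied to $T$ then gives the key bound
\[
\KL(\Q',\Q)\;\geq\;\KL\bigl(\P_{\cF(\Q')},\,\P_{\cF(\Q)}\bigr).
\]

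I would use this to identify $U$ with $\cF(\P_U)$ by two opposite inclusions. On one hand, if $\Q\in\C_{\cF}$ with $\cF(\Q)=u$, then some $\Q'\in\cR_{\hat m}$ lies within KL distance $z$ of $\Q$, so the bound above gives $\KL(\P_{\hat m},\P_u)\leq z$, and this in turn exhibits $\P_{\hat m}$ as a witness in $\cR_{\hat m}\cap B(\P_u,z)$, placing $\P_u$ itself in $\C_{\cF}$. Hence $U\leq\sup\{\xi:\KL(\P_{\hat m},\P_\xi)\leq z\}$. On the other hand, the very same witness argument shows that every such $\xi$ is actually attained in $\C_{\cF}$ by $\P_\xi$, so this supremum is at most $U$. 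Combined, $U=\cF(\P_U)$, attained by $\P_U$; the argument for $L$ is symmetric.

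For uniqueness, I would suppose that some $\Q\in\C_{\cF}$ attains $\cF(\Q)=U$ with witness $\Q'\in\cR_{\hat m}$ and exploit tightness: the chain $\KL(\Q',\Q)\leq z=\KL(\P_{\hat m},\P_U)\leq\KL(\Q',\Q)$ must be an equality throughout, so the data-processing step is tight. The standard chain-rule decomposition of KL along the kernel $T$ then forces $\Q$ and $\Q'$ to share the same conditional on the input given the output. Writing this out for each index $i\notin\{1,k\}$ produces the $2\times 2$ linear system $q_i\,\hat m=q_i'\,U$ and $q_i(1-\hat m)=q_i'(1-U)$, whose only solution when $U\neq\hat m$ is $q_i=q_i'=0$. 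Hence $\Q$ is supported on $\{1,k\}$, and since $\cF(\P_\xi)=\xi$ we must have $\Q=\P_U$. The one technical obstacle I anticipate is formalizing the equality-in-DPI step cleanly; the rest is a clean application of data processing combined with the observation that the two-point family saturates both sides of the inequality.
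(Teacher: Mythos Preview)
Your approach is essentially the same as the paper's. The paper proves the key contraction
\[
\KL(\cF(\P),\cF(\Q))\ \leq\ \KL(\P,\Q)
\]
directly via two applications of the log-sum inequality; your data-processing argument through the randomized kernel $T(i\to 1)=w_i$ is precisely the same inequality in different clothing (log-sum \emph{is} the DPI computation for this kernel). Both proofs then identify $U$ with the upper endpoint of the binary-KL interval $\{\xi:\KL(\Ber(\hat m),\Ber(\xi))\le z\}$ and argue uniqueness from the equality case.

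One remark on the equality step, which you flagged as the delicate point: your treatment is actually a bit more careful than the paper's. The paper states the log-sum equality condition as ``$w_j p_j=w_j q_j$ and $(1-w_j)p_j=(1-w_j)q_j$ for all $j$'', which as written would force $\P=\Q$ and hence would (incorrectly) exclude the pair $(\P_{\hat m},\P_U)$ itself. The correct condition---as your conditional-matching argument captures---is that $p_j/q_j$ is constant over $\{j:w_j>0\}$ and constant over $\{j:w_j<1\}$, with the two constants equal to $\hat m/U$ and $(1-\hat m)/(1-U)$ respectively. Since these differ when $\hat m\neq U$, any $j$ with $w_j\in(0,1)$ must have $q_j=0$, forcing $\Q=\P_U$ while still permitting the witness pair on the edge. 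Your $2\times 2$ system encodes exactly this, so the ``technical obstacle'' you anticipated is already resolved by what you wrote.
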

\begin{proof}
The proof for $L$ and $U$ are similar, so in what follows we focus on $U$.
The claim is a simple consequence of the log-sum inequality. Specifically, consider any two distributions $\P$ and $\Q$. We have
\begin{align*}
\KL & (\cF (\P ),\cF (\Q )) \\
& = \left( \sum_{j\in [k]} w_j p_j \right) \log \frac{\sum_{j\in [k]} w_j p_j}{\sum_{j\in [k]} w_j q_j} \\
& \quad + \underbrace{\left( 1- \sum_{j\in [k]} w_j p_j \right)}_{=\sum_{j\in [k]} (1-w_j) p_j} \log \frac{1-\sum_{j\in [k]} w_j p_j}{1-\sum_{j\in [k]} w_j q_j} \\
& \leq \sum_{j\in [k]} w_j p_j \log \frac{p_j}{q_j} + \sum_{j\in [k]} (1-w_j) p_j \log \frac{p_j}{q_j} \\
& = \KL (\P ,\Q ) \ ,
\end{align*}
by applying the log-sum inequality for both terms on the right side of the first line separately. The inequalities are only tight when $w_j p_j = w_j q_j$ and $(1-w_j) p_j = (1-w_j) q_j$ $\forall j\in [k]$ respectively. This can only happen if $\P \equiv \Q$.
Using this inequality with any $\P \in \cR_{\cF (\hat{\P}_n)}$ and any $\Q \in \cR_U$ implies that the intersection between $\C_{\cF}$ and $\cR_U$ is the single point $\P_{U}$, and the claim is proved.
\end{proof}


\begin{figure}[h]
\begin{center}
\includegraphics[width = 2.5in]{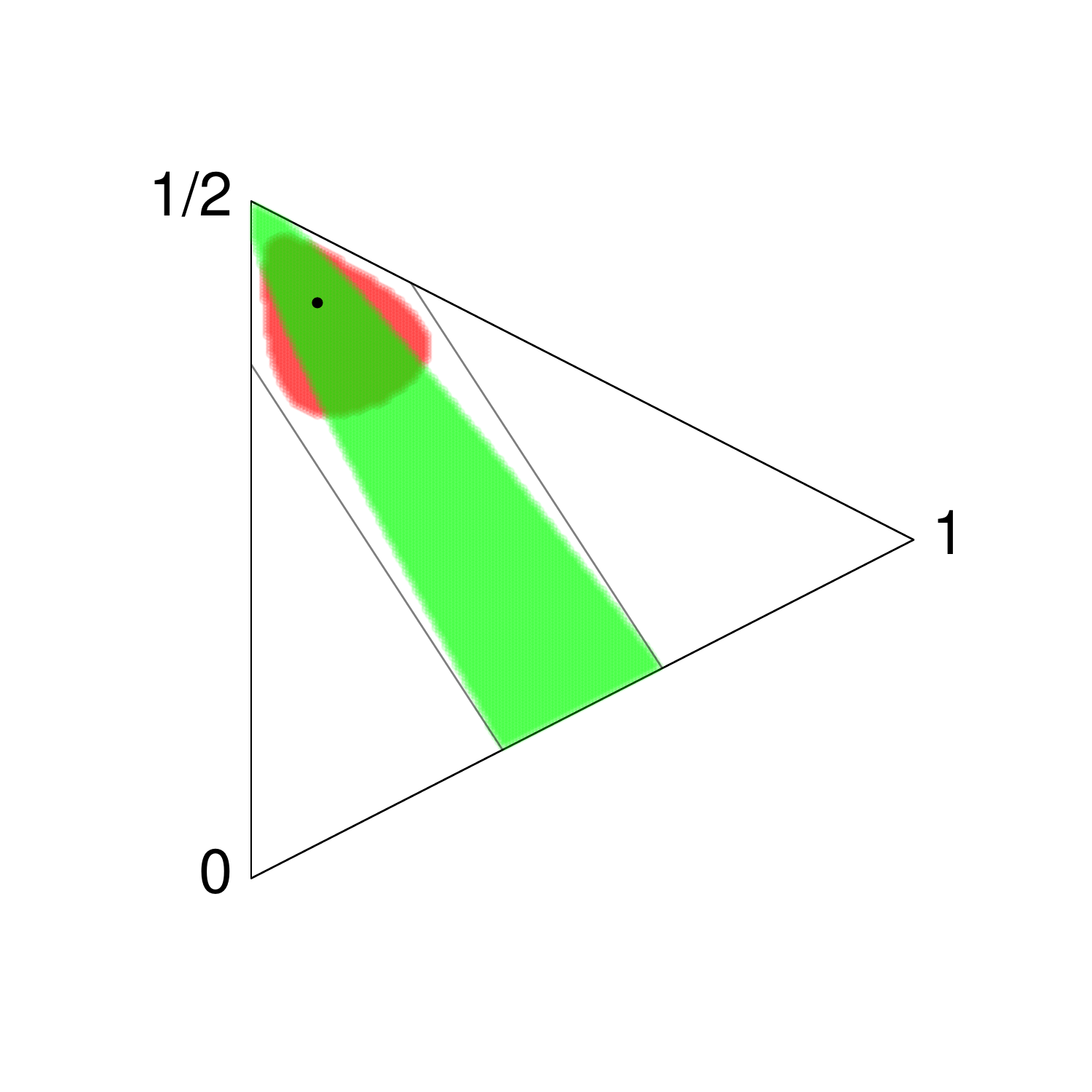}\quad
\includegraphics[width = 2.5in]{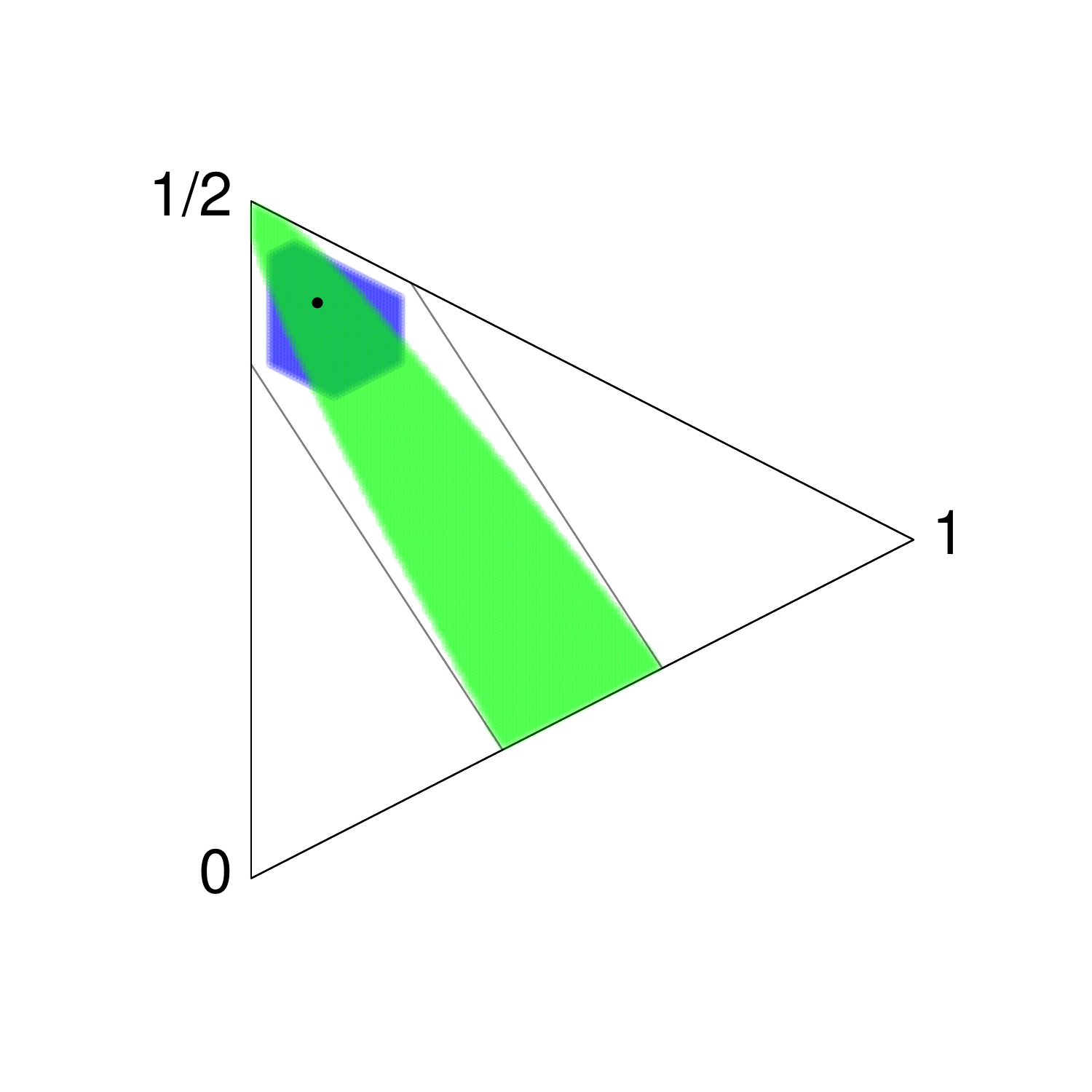} \vspace{-.3in}
\caption{ \ $\C_{\rm Csiszar+Sanov}$ (left) and $\C_{\rm
    Csiszar+Polytope}$ (right) for $\hat{\P}_n=(1/10,8/10,1/10), n=100,
  \delta =0.05$), with $\C_{\cF}$ indicated by the green region. The weights of the linear functional are
  $w_1=0,w_2=1/2,w_3=1$. The plots also include the level sets $\cR_L,
  \cR_U$ (black lines), where $U$ and $L$ are the values defined in
  Proposition~\ref{prop:geometry}. The plots illustrate how
  intersecting $\C_{\rm Sanov}$ or $\C_{\rm Polytope}$ with $\C_{\cF}$
  makes the former regions narrower in the direction perpendicular to
  $\cR_{\cF (\hat{\P}_n)}$ and improve the bounds as a
  result.\vspace{-.15in} \label{fig:regions_in_simplex}}
\end{center}
\end{figure}

Proposition~\ref{prop:geometry} shows in exactly what sense Theorem~\ref{thm:csiszar} is a generalization of \eqref{eqn:bernoulli}: the confidence bounds derived for $\cF (\P )$ using $\C_{\cF}$ are the same as applying \eqref{eqn:bernoulli} to the bounded random variable $\xi$ defined as $\P (\xi = w_i) =p_i$.
However, Proposition~\ref{prop:geometry} also shows that incorporating information about where $\hat{\P}_n$ lies within the simplex might lead to smaller confidence regions, since $\C_{\cF}$ is widest near  the edge of the simplex connecting the corners $(1,0,\dots,0)$
and $(0,\dots,0,1)$, but is potentially narrower elsewhere. A natural way to do this is by intersecting $\C_{\cF}$ with either $\C_{\rm Sanov}$ or $\C_{\rm Polytope}$.
We denote the intersected regions by $\C_{\rm Csiszar+Sanov}$ and
$\C_{\rm Csiszar+Polytope}$ respectively. Naturally, in order to
maintain the same confidence level we need to combine the two regions
using a union bound. We illustrate these regions in a
3-dimensional simplex in Figure~\ref{fig:regions_in_simplex}.

\subsubsection{Asymptotic performance}\label{sec:asymptotics}

The proposition below shows that when we apply Theorem~\ref{thm:csiszar} in the context of linear functionals, the exponent in the bound is equal to what we would get from the central limit theorem. This shows that Theorem~\ref{thm:csiszar} is asymptotically tight.
Based on this, we expect that the confidence bounds derived from both $\C_{\rm Csiszar+Sanov}$ and $\C_{\rm Csiszar+Polytope}$ have optimal asymptotic performance. In Section~\ref{sec:experiments} we illustrate that these bounds enjoy very good performance across all sample sizes.

The full proof of the proposition below can be found in the Supplementary Materials. The high-level
argument is that when $\epsilon$ is small, the minimizer of
$\min_{\Q \in E} \KL (\Q ,\P )$ will be close to $\P$. When $\P$ and
$\Q$ are close, $\KL (\Q ,\P ) \approx \chi^2 (\Q ,\P )$. Minimizing
the chi-squared divergence instead of the KL-divergence on $E$ would
precisely give the value $\epsilon^2 /(2 \Var_\P (\cF ))$. The
proposition shows that the exponent behaves like that of the Bernstein bound
in equation~(\ref{eqn:bern}).
\begin{proposition}\label{prop:asymptotics}
Let $\cF (\P ) = \sum_{j\in [k]} w_j p_j$ be a linear functional. Let $\epsilon >0$ and define $E= \{ \Q :\ \cF (\Q) >\cF (\P ) +\epsilon \}$. For $\epsilon$ small enough, the exponent in Theorem~\ref{thm:csiszar} can be bounded as
\[
\inf_{\Q \in E} \KL (\Q, \P ) \geq \frac{2 \epsilon^2}{\Var_\P (\cF )} - O(\epsilon^3 ) \ ,
\]
where $\Var_\P (\cF ) = \sum_{j\in [k]} w_j^2 p_j - \big( \sum_{j\in [k]} w_j p_j \big)^2$.
\end{proposition}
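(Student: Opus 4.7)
The plan is to reduce the nonlinear optimization $\inf_{\Q \in E} \KL(\Q,\P)$ to a quadratic program by Taylor-expanding $\KL$ around $\P$, solve that QP in closed form with Lagrange multipliers, and then control the remainder.

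First I would argue that, by continuity of $\KL(\cdot,\P)$ and the fact that $\KL$-sublevel sets are compact, the infimum over the open halfspace $E$ is attained on its boundary, i.e.\ over $\{\Q : \sum_j q_j = 1,\ \sum_j w_j q_j = \cF(\P)+\epsilon\}$. Next, I would use the componentwise expansion $q\log(q/p) = (q-p) + (q-p)^2/(2p) + O((q-p)^3/p^2)$, which together with $\sum_j(q_j-p_j)=0$ gives
\[
\KL(\Q,\P) \;=\; \frac{1}{2}\sum_{j\in[k]} \frac{(q_j-p_j)^2}{p_j} \;+\; O\!\left(\|\Q-\P\|_\infty^3\right),
\]
valid uniformly for $\Q$ in a neighborhood of $\P$ (with implicit constant depending on $\min_j p_j$, which is positive since $\P \in \cS_k$).

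Then I would solve the surrogate QP: minimize $\tfrac12\sum_j r_j^2/p_j$ subject to $\sum_j r_j = 0$ and $\sum_j w_j r_j = \epsilon$, writing $r_j := q_j - p_j$. Two Lagrange multipliers $\alpha,\beta$ yield stationarity $r_j^\star = p_j(\alpha + \beta w_j)$; plugging into the two linear constraints forces $\alpha = -\beta\, \E_\P[W]$ and $\beta\, \Var_\P(\cF) = \epsilon$, so $\beta = \epsilon/\Var_\P(\cF)$. Substituting back, the minimum value of the quadratic surrogate is exactly $\epsilon^2/(2\Var_\P(\cF))$, which (up to the paper's constant convention) matches the stated Bernstein/CLT exponent.

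The main obstacle is controlling the cubic remainder at the \emph{actual} $\KL$-minimizer $\Q^*$, which differs from the quadratic minimizer. My intended bootstrap is: since the quadratic minimum is $O(\epsilon^2)$ and it upper bounds $\inf \KL$ up to cubic error, one has $\KL(\Q^*,\P) = O(\epsilon^2)$; Pinsker's inequality then gives $\|\Q^*-\P\|_1 = O(\epsilon)$, so the cubic remainder evaluated at $\Q^*$ is genuinely $O(\epsilon^3)$, and the quadratic calculation yields a valid lower bound with that error. The only other technicality is that the $O(\epsilon^3)$ constant scales with $1/\min_j p_j^2$, which is harmless for a fixed $\P$ in the open simplex but explains why the result is asymptotic rather than uniform over $\P$.
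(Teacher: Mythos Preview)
Your proposal is correct and follows essentially the same strategy as the paper's proof: Taylor-expand $\KL$ to the $\chi^2$ quadratic form, solve the constrained quadratic via Lagrange multipliers (the paper's explicit construction $q'_i/p_i = \lambda w_i + \nu$ is precisely your quadratic minimizer), use that to upper bound $\inf_E \KL$ by $\epsilon^2/(2\Var_\P(\cF)) + O(\epsilon^3)$, invoke Pinsker to localize the true $\KL$-minimizer $\Q^*$ within $O(\epsilon)$ of $\P$, and then re-apply the Taylor expansion at $\Q^*$ to get the matching lower bound with $O(\epsilon^3)$ remainder. The only cosmetic difference is that the paper builds the upper-bound witness $\Q'$ before introducing the quadratic surrogate, whereas you obtain it as the solution of the surrogate; and, as you noticed, the constant in the proposition statement ($2\epsilon^2/\Var$) does not match what the proof (and your calculation) actually yields ($\epsilon^2/(2\Var)$).
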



\subsection{Quantiles}\label{sec:quantiles}

We now take a moment to review the problem of estimating quantiles of a discrete random variable.

The $\tau$-quantile of a random variable $X$ is defined as
\[
\cQ_\tau (X) = \inf \{ x:\ \tau \leq F_X(x) \} \ ,
\]
where $F_X(x)=\P (X\leq x)$ is the CDF. Without loss of generality we assume $X$ takes values in $[k]= \{ 0,\tfrac{1}{k-1},\dots ,1\}$\footnote{For random variables $X$ and $Y$ such that $Y=f(X)$ then $\cQ_\tau (Y) = f(\cQ_\tau (X))$.}. 

The standard method for constructing quantile confidence bounds is first constructing a confidence band for the CDF, and then taking the extreme values of the quantile among distributions in the CDF band. This approach fits the general strategy advocated in this work.

Perhaps the most well-known method to derive confidence bands for the CDF is the DKWM-inequality \cite{massart1990tight}, which states
\[
\P \left( \sup_x \left| \hat{F}_n(x) - F(x) \right| >z \right) \leq 2\exp \left( -2nz^2 \right) \ ,
\]
where $\hat{F}_n$ is the empirical CDF based on $n$ samples. This method is widely used in practice, see for instance \cite{szorenyi2015qualitative}.

However, there exist confidence bands for the CDF that are uniformly
better than those derived from the DKWM inequality, see
\cite{duembgen2014confidence} and references therein. In the context
of discrete random variables taking finitely many values (e.g.,
multistar ratings), the bounds of \cite{duembgen2014confidence} are equivalent to applying the Bernoulli-KL confidence bound for each point of the CDF (i.e. each point in the set $\{0,\tfrac{1}{k-1},\dots ,\tfrac{k-2}{k-1}\}$), and combining them with a union-bound.

If the union-bound is performed naively with the confidence equally
allocated among the $k-1$ points, then the latter confidence band is
inferior to the one obtained from the DKWM inequality for values $x$
where $F_X(x) \approx 1/2$. However, this drawback can be mitigated by
allocating the confidence in a data-driven way, as described and illustrated in Section~\ref{sec:experiments}.


\section{Computational Methods and Experiments}\label{sec:experiments}


\subsection{Linear functionals}

We demonstrate the performance of the method described in Section~\ref{sec:linear} by numerical experiments. We compute the average number of samples needed for the confidence bound for the mean of level $\delta =0.05$ to reach a certain width, for various methods\footnote{Confidence intervals are restricted to lie within
  $[0,1]$.}  and true distributions.
We performed experiments with $k=3$ and $k=5$ and in each case $w_i =(i-1)/(k-1),\ i=1,\dots ,k$. We choose a number of true distributions from the simplex representative of key geometric positions: the midpoint of the probability simplex (the uniform distribution), and midpoints of lower dimensional faces.

Recall that in order to compute the confidence bounds outlined in
Section~\ref{sec:results} we need to solve optimizations $\min_{\Q \in
  \C (\hat{\P}_n)} \cF (\Q )$ and $\max_{\Q \in \C (\hat{\P}_n)} \cF (\Q )$,
where $\cF$ is linear. Since the sets $\C_{\rm Sanov}$ and $\C_{\rm Polytope}$ are convex, solving these optimizations is straightforward.
However, for $\C_{\rm Csiszar+Sanov}$ and $\C_{\rm Csiszar+Polytope}$ the feasible region is itself defined by an optimization, and so the optimizations above become bi-level problems. In particular, for the set $\C_{\rm Csiszar+Sanov}$ we need to solve
\begin{align*}
{\min /\max}_{q_1,\dots ,q_k} & \ \sum_{i\in [k]} w_i q_i \quad \textrm{s.t.} \\
& q_i \geq 0 \forall i\in [k], \sum_{i\in [k]} q_i =1 \ , \\
& \KL (\hat{\P}_n,\Q ) \leq z \ , \\
& \min_{\P' \in \cR_{\cF (\hat{\P}_n)}} \KL (\P' ,\Q ) \leq z' \ ,
\end{align*}
where $z,z' \in \R_+$ are chosen such that both $\C_{\rm Sanov}$ and $\C_{\cF}$ have confidence $\delta /2$ (see \eqref{eqn:sanov_region} and \eqref{eqn:linear_region}). The problem for $\C_{\rm Csiszar+Polytope}$ is analogous.

We solve the problem above using a binary search. Let $u\in [0,1]$ be a fixed value, and suppose we want to decide whether or not $\min_{\Q \in \C_{\rm Csiszar+Sanov}} \cF (\Q ) \leq u$. Deciding this is equivalent to solving
\begin{align*}
\min_{\P' ,\Q} & \KL (\P' ,\Q ) \quad \textrm{s.t.} \\
& q_i \geq 0 \forall i\in [k], \sum_{i\in [k]} q_i =1 \ , \\
& {p'}_i \geq 0 \forall i\in [k], \sum_{i\in [k]} {p'}_i =1 \ , \\
& \KL (\hat{\P}_n,\Q ) \leq z \ , \\
& \sum_{i\in [k]} w_i {p'}_i = \cF (\hat{\P}_n) \ , \\
& \sum_{i\in [k]} w_i q_i = u \ .
\end{align*}
This is a minimization of a convex function subject to convex
constraints, so it can be easily solved with standard solvers. We can
combine this with a binary search to find $\min_{\Q \in \C_{\rm
    Csiszar+Sanov}} \cF (\Q )$. Finding the maximum is analogous. We
implemented all the optimization problems using the R package
CVXR.
\footnote{This implementation may not be the most
  efficient way of computing these confidence bounds. Finding
  the most efficient implementation is an important practical
  consideration.} 
Code for
  the optimization is provided in the Supplementary Materials file code.txt.

The experiments tell a similar story regardless of the true
distribution, therefore we only show a few representative examples in
Figure~\ref{fig:linear} and present more in the Supplementary
Material. In general, the number of
samples required for an interval of width $W$ scales roughly like $W^{-2}$, and so
we compare the relative number of samples needed by the different
methods. We see that our proposed method has the most favorable
sample complexity in almost all cases. Empirical Bernstein
bound starts as a
clear loser, requiring 4-5 times more samples than our new bounds in
the large interval width (small sample) regimes.  However, it matches
our new bounds in the small width (large sample) regimes.  The
Bernoulli-KL bound performs better in large width regimes, but can
become loose in small width regimes.  For example, for the
distribution $(0,0,1/3,1/3,1/3)$ the Bernoulli-KL bound requires
about 4 times more samples that the new bounds to achieve a width of
$1/128$. If the distribution is concentrated on $3$ stars, as is the
case $(0, 0.05, 0.9, 0.05, 0)$, then the poor performance of the Bernoulli-KL bound
is dramatic.
Note that the Bernoulli-KL is best when the true distribution is in
fact Bernoulli, as is the case $(1/2,0,0,0,1/2)$, but our new bounds
are almost as good. The
reason \emph{Csiszar+Sanov} and \emph{Csiszar+Polytope} slightly under
perform in this special case is due to the union bound that arises
when we combine $\C_{\cF}$ with $\C_{\rm Sanov}$ or
$\C_{\rm Polytope}$. This effect could be mitigated by a data-driven
union-bound that allocates most of the confidence budget to $\C_{\cF}$
when $\hat{\P}_n$ is near the edge of the simplex connecting
$(1,0,\dots,0)$ and $(0,\dots,0,1)$. 

\begin{figure}
\begin{center}
\includegraphics[width = 3in]{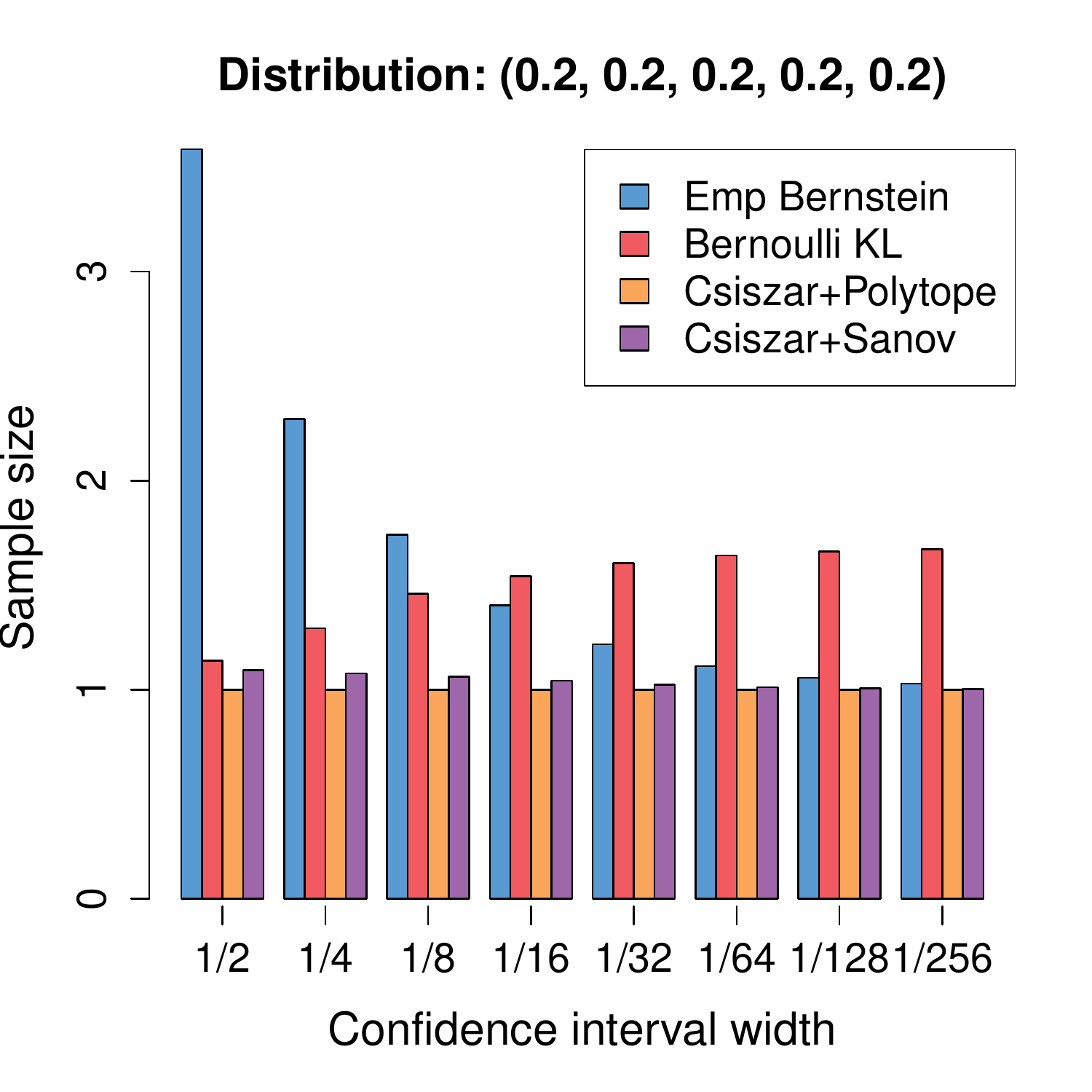} 
\includegraphics[width = 3in]{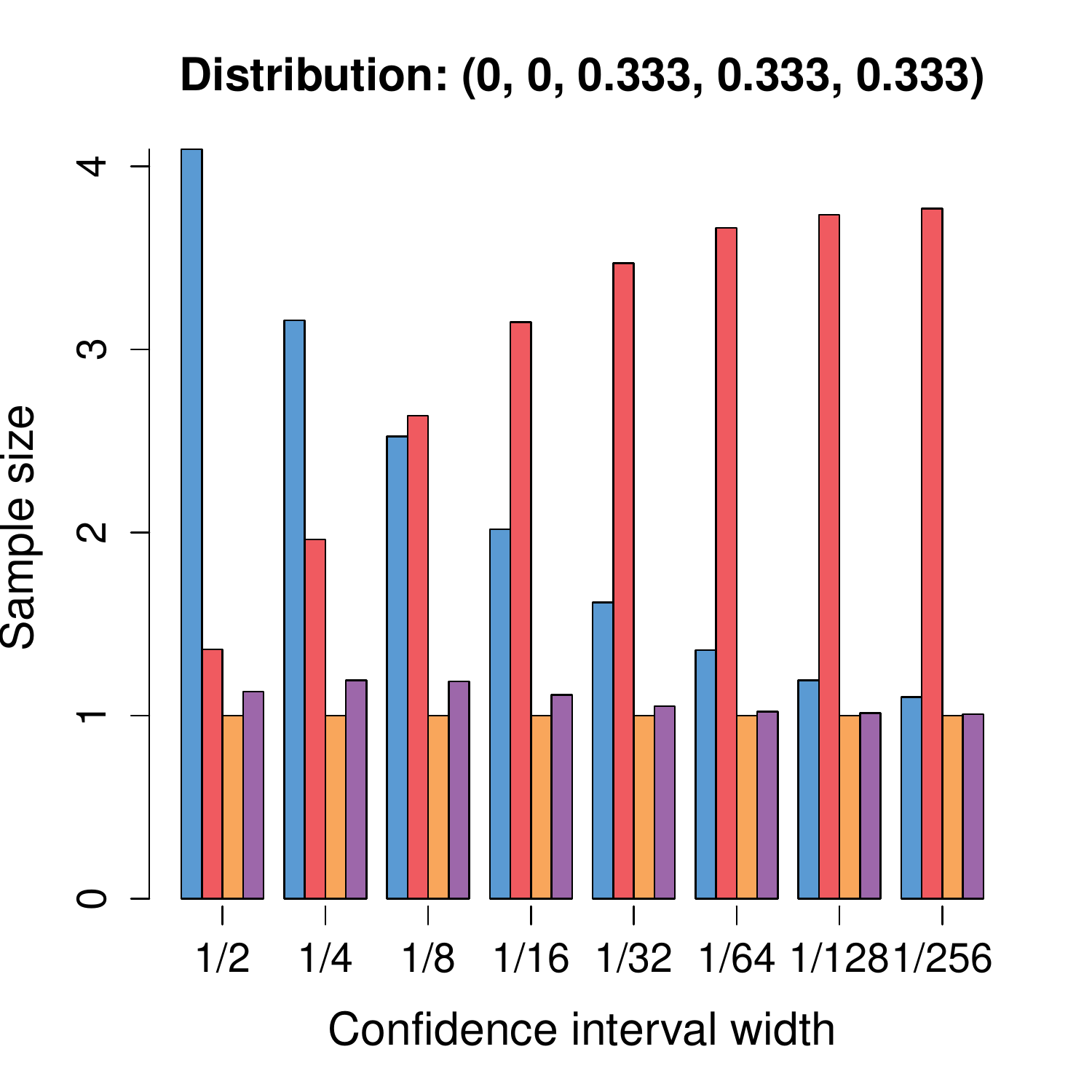}
\includegraphics[width = 3in]{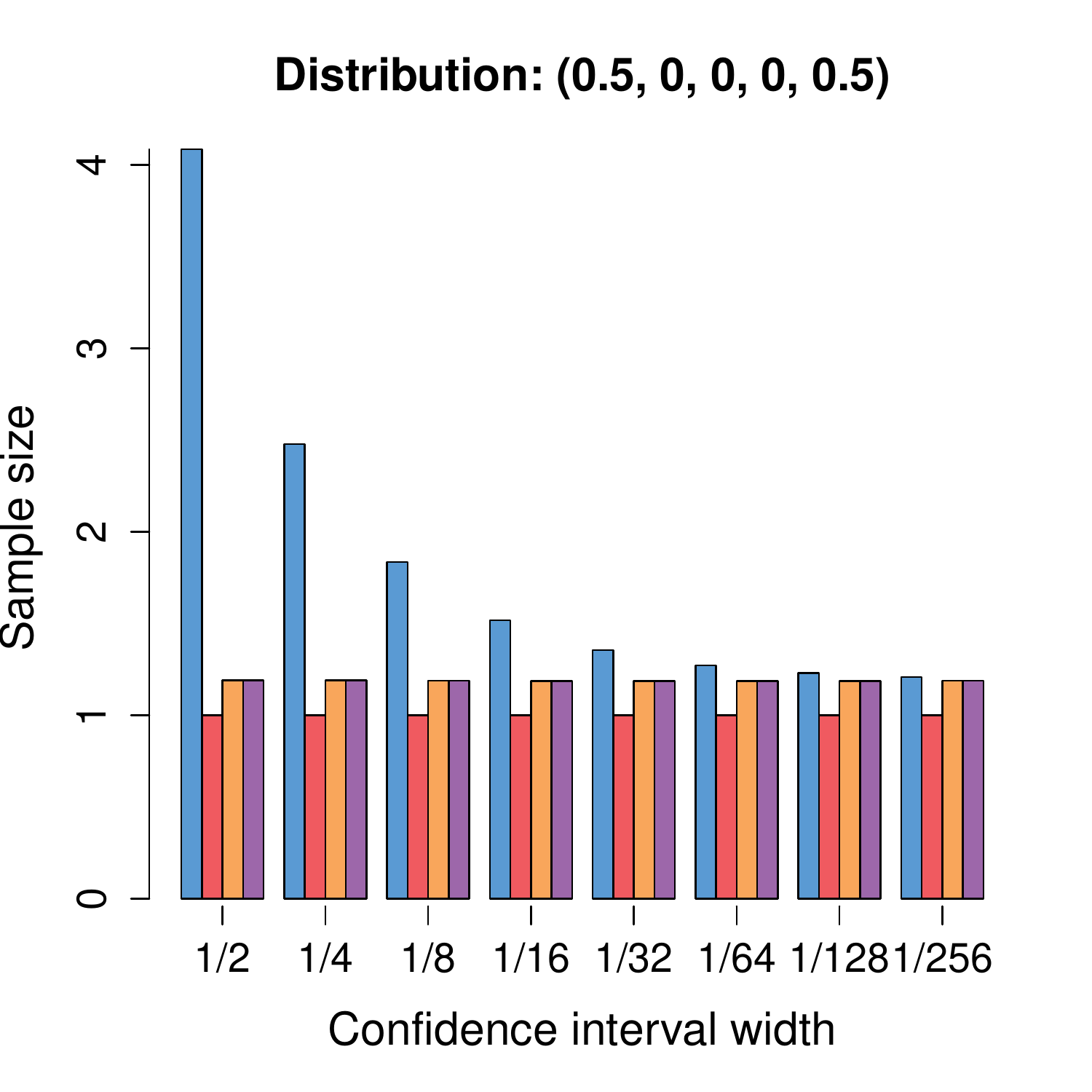} 
\includegraphics[width = 3in]{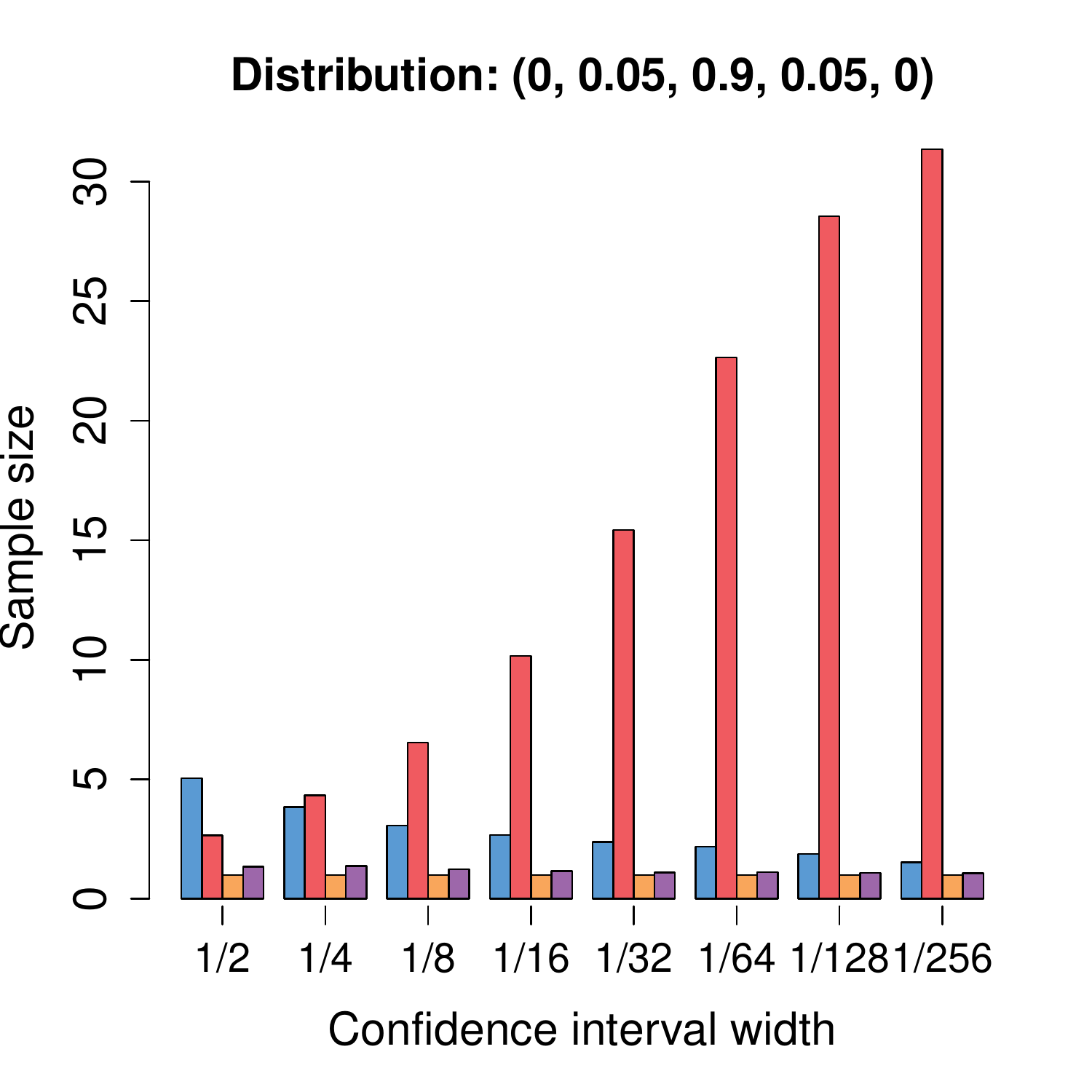}

\vspace{-.1in}
\caption{Average sample size requirements as a function of confidence
  interval width (20 repetitions at each sample size / interval
  width).  The  required sample sizes are very stable (essentially constant) over
  repetitions.  Sample sizes are normalized to the smallest/best (among the various methods) at each width.  The
  empirical Bernstein bound (blue) typically requires several times more
  samples than our new bounds (orange and purple) at small sample
  sizes (large interval widths), but eventually improves as the sample
  sizes increase, as expected.  The Bernoulli-KL bound (red) performs
  comparatively well at small sample sizes, but generallly degrades at
  larger sample size (smaller interval widths), sometimes requiring
  several times more samples than our new bounds. The third distribution
  $(1/2,0,0,0,1/2)$ is an exceptional case, since it corresponds to a
  Bernoulli distribution and the Bernoulli-KL bound is ideal for such cases.\label{fig:linear}}

\end{center}
\end{figure}


\subsection{Quantiles}

\begin{figure}
\begin{center}
\includegraphics[width = 3in]{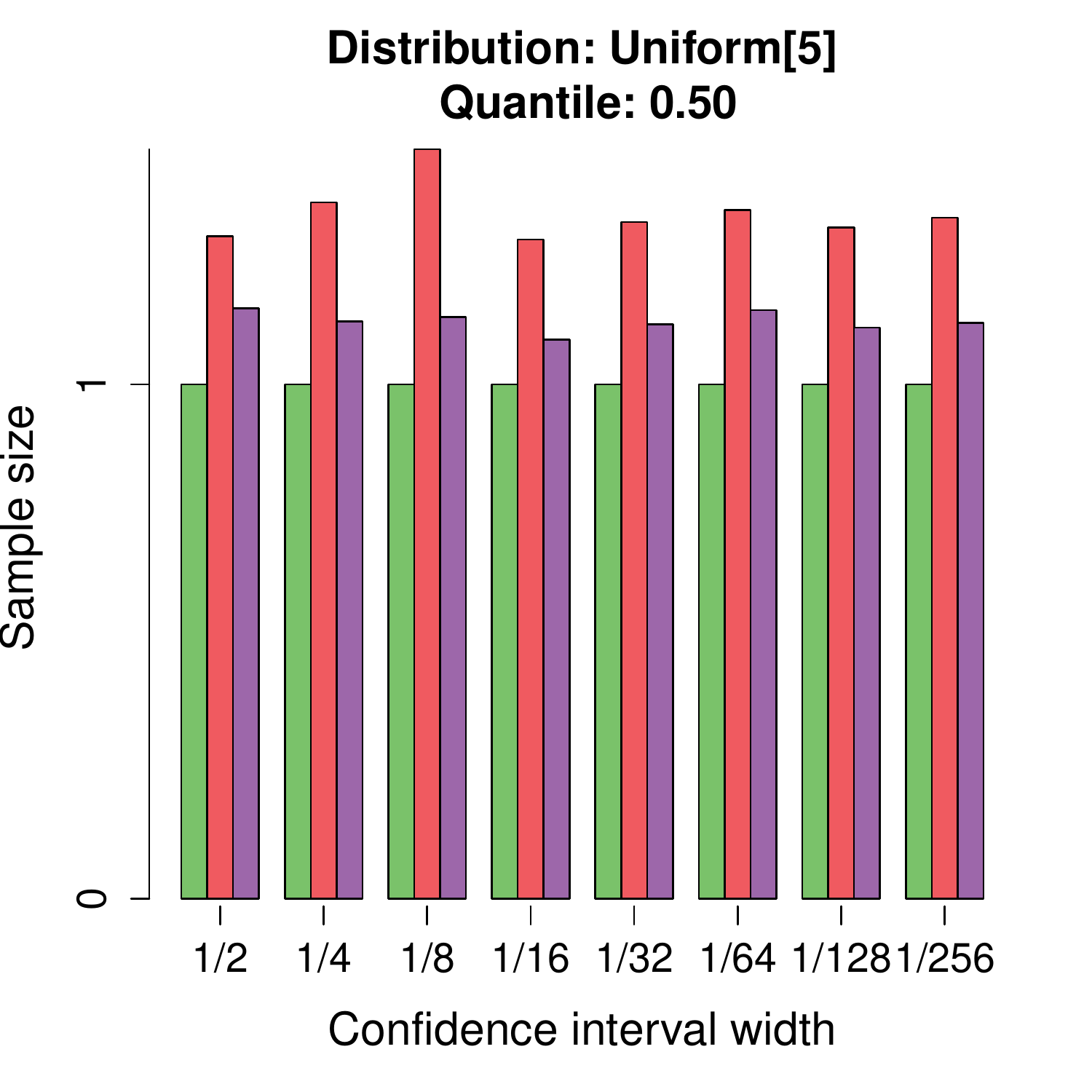}
\includegraphics[width = 3in]{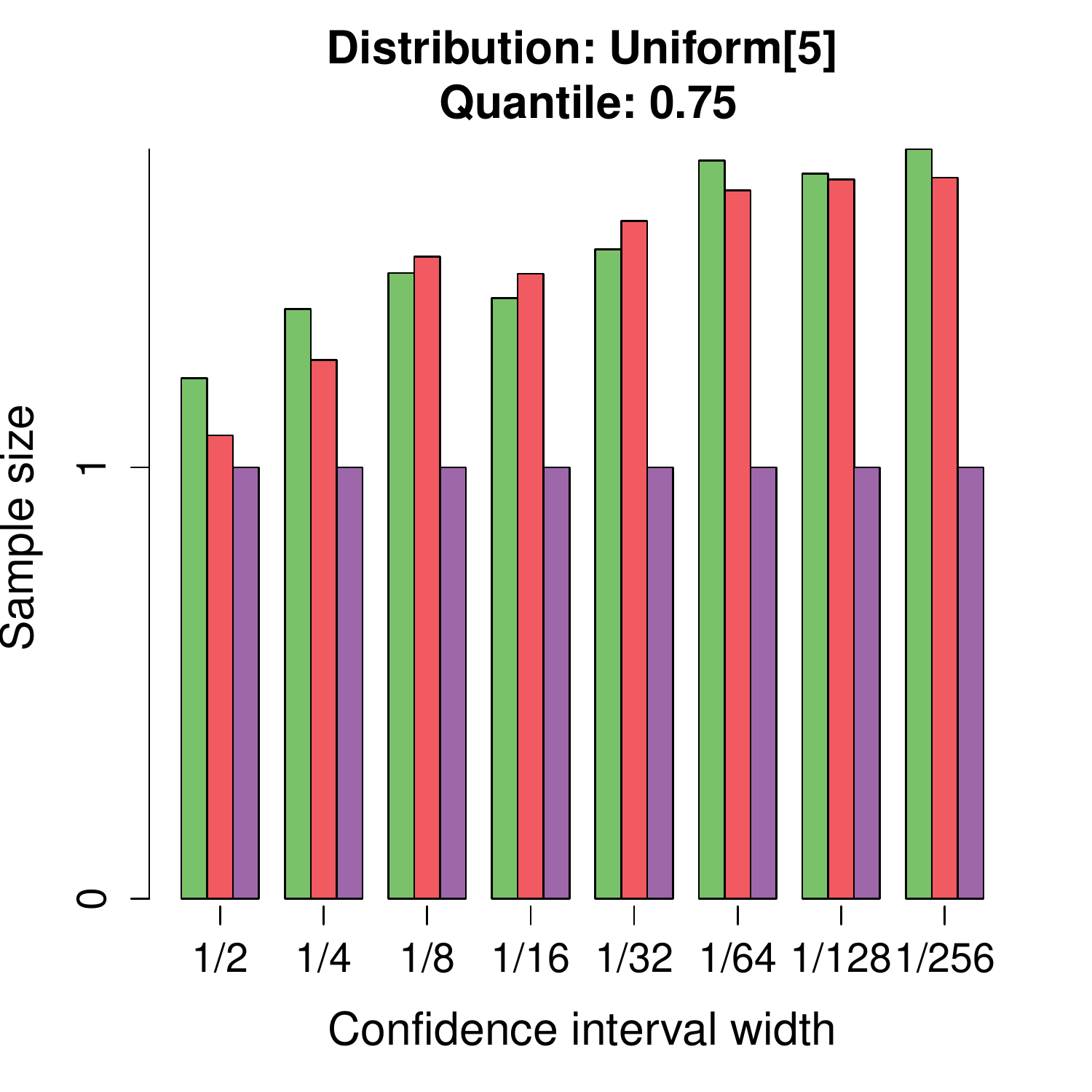}
\includegraphics[width = 3in]{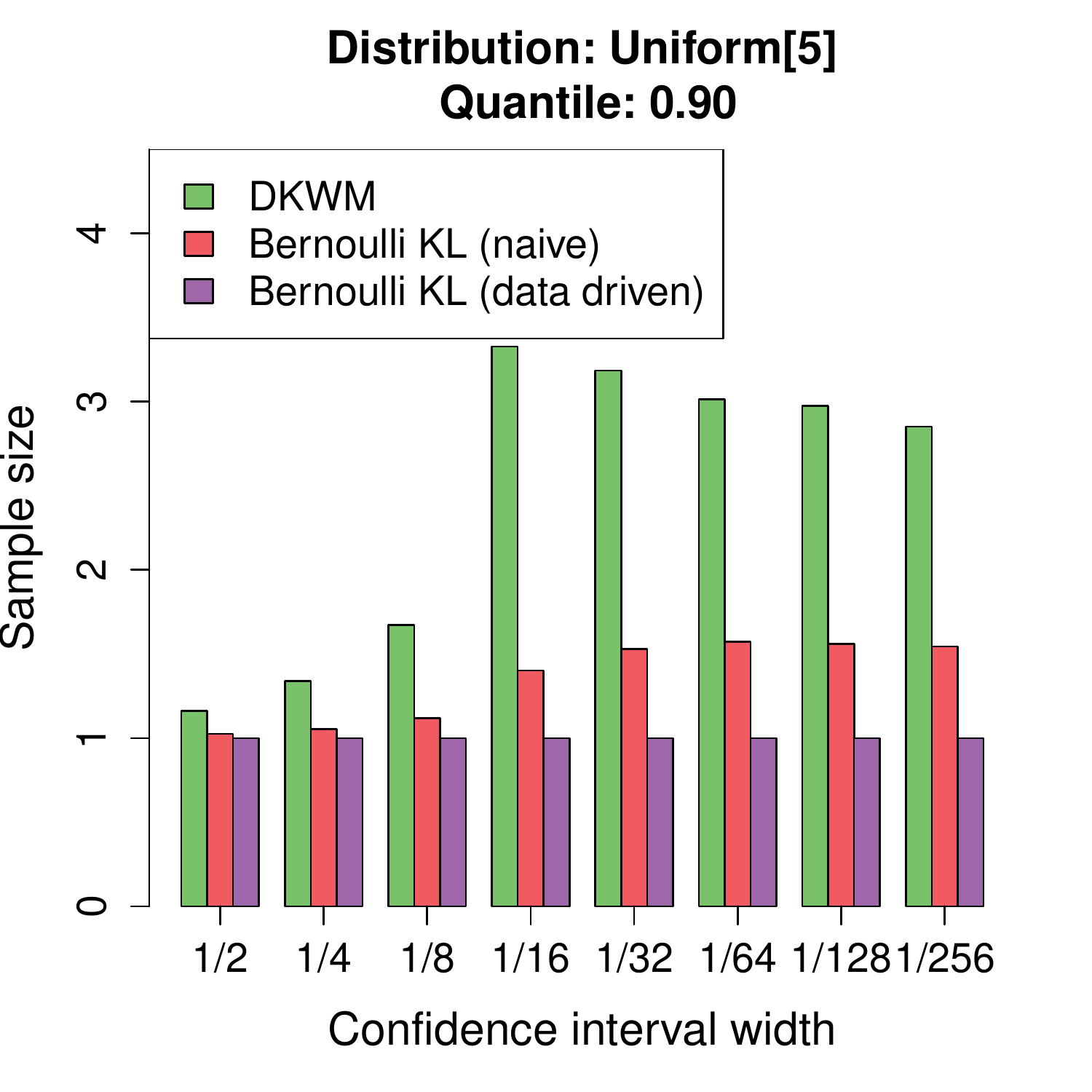}

\caption{Average sample size needed for the width of the confidence
  interval for the quantile to reach a desired level based on 20
  repetitions, for various quantiles.  The required sample sizes are very stable (essentially constant) over
  repetitions. Sample sizes are normalized to the best (among the methods) at each width.  The new Bernoulli-KL CDF bounds perform significantly
  better than the DKWM bound for more extreme quantiles like $0.9$.
\label{fig:quantiles}}
\end{center}
\end{figure}

In this section we compare the performance of CDF bands obtained from the DKWM and Bernoulli-KL inequalities. The width of these bands around the $\tau$-quantile directly influences the derived confidence bounds for the quantile. One possible way of measuring the width is
\[
{\rm Width_\tau} = \sum_{i\in [k]} |\min \{ U_i-\tau , \tau -L_i \}| \1 \{ L_i \leq \tau \leq U_i \} \ ,
\]
where $[L_i,U_i]$ are the confidence bounds for $F_X(i)$, $i\in [k]$.
In Figure~\ref{fig:quantiles} we plot the average sample size needed
for ${\rm Width_\tau}$ to reach a certain value with $\delta =0.05$, for the uniform distribution with $k=5$ and various values for $\tau$. We use two versions of the Bernoulli-KL CDF bounds: one with a naive union bound and one with a data-driven union bound.
The heuristic behind the data-driven union bound is to
assign more
confidence to points of the CDF where the CDF value is close to
$\tau$. We  do this as follows. Define $c_i =
(i-\hat{\tau}+1)^2 \ ,\ i\in [k]$, and $c=\sum_{i\in [k]} 1/c_i$,
where $\hat{\tau}$ is the $\tau$-quantile of $\hat{\P}_n$.  Then we
allocate $\delta /(c \cdot c_i)$ confidence for the bound on $F_X
(i), i\in [k]$. We do not claim that this is the best possible method,
but it yields good results empirically (see  purple bars in Figure~\ref{fig:quantiles}) and we stress that
this approach does yield valid confidence intervals.

Figure~\ref{fig:quantiles} show that the adverse effects of the union bound can be mitigated by using a data-driven method. The Bernoulli-KL method fares almost as well as DKWM when $\tau=0.5$. However, as $\tau$ gets farther away from $0.5$, the benefit of using Bernoulli-KL becomes more and more pronounced. This should come as no surprise, since Bernoulli-KL bound is the tightest possible method for constructing a confidence bound for any fixed point of the CDF.
Since the adverse effects of union bounding can become more pronounced
for larger alphabets, we present numerical experiments for larger
alphabets in the Supplementary Material. 


\section{Conclusion}\label{sec:conclusion}

In this work we illustrated the merit of using information-theoretic
inequalities for constructing confidence bounds for functionals of
multistar random variables. These bounds account for the geometry of
the probability simplex, and as a result exhibit excellent performance
across all sample sizes when compared to other popular bounds in the
literature. Conventional bounds may need up several times more samples
to reach the same confidence interval width as the bounds proposed in
this work.  Although outside the scope of this work, the general
recipe presented here might prove fruitful for functionals other than
linear, such as the variance or higher moments. Extending these
methods to other functionals is a fruitful avenue for
future research.


\bibliographystyle{acm}
\bibliography{Multistar_references}

\appendix


\section{Running shoe example}

The shoes and the ratings in the example of Section~\ref{sec:intro} are \href{https://www.amazon.com/YILAN-Womens-Fashion-Sneakers-Pink-4/dp/B06XZ4VKW9/ref=sr_1_4?ie=UTF8&qid=1547297579&sr=8-4&keywords=sneakers}{Shoe 1} and \href{https://www.amazon.com/adidas-Performance-Womens-Cloudfoam-Running/dp/B0711R2TNB/ref=sr_1_5?ie=UTF8&qid=1547297579&sr=8-5&keywords=sneakers}{Shoe 2}.


\section{Proof of Theorem~\ref{thm:csiszar}}\label{app:csiszar_proof}

We begin with setting up notation. Without loss of generality, assume that $X$ takes values from $[k]$. For any $n$-length sequence $x \in [k]^n$, let $T_x$ denote the \emph{type} of $x$ (the empirical distribution generated by the sequence). Denote the set of all types based on $n$-length sequences by $\mathcal{T}_n$, formally $\mathcal{T}_n = \{ T_x :\ x\in [k]^n \}$. We use the shorthand notation $\P (E) = \P (x:\ T_x \in E)$.

Define the distribution $\overline{\P}(i) = \sum_{x\in [k]^n} \Q (x) T_x (i)$, where $\Q$ is an arbitrary distribution over \emph{$n$-length sequences}. Then
\begin{align*}
\P (E) & = \exp \left( \log \P (E) \right) \\
& = \exp \left( \sum_{x\in [k]^n} \Q (x) \log \P (E) \right) \\
& = \exp \Bigg( \sum_{x\in [k]^n} \Q (x) \log \frac{\P (x)}{\overline{\P} (x)} \\
& \quad + \sum_{x\in [k]^n} \Q (x) \log \frac{\overline{\P} (x) \P (E)}{\P (x)} \Bigg) \ .
\end{align*}
But
\begin{align*}
\sum_{x\in [k]^n} & \Q (x) \log \frac{\P (x)}{\overline{\P} (x)} \\
& = \sum_{x\in [k]^n} \Q (x) \log \prod_{j\in [k]} \left( \frac{\P (j)}{\overline{\P} (j)} \right)^{n T_x (j)} \\
& = \sum_{x\in [k]^n} \Q (x) t \sum_{j\in [k]} T_x (j) \log \frac{\P (j)}{\overline{\P} (j)} \\
& = -n \KL (\overline{\P} ,\P ) \ ,
\end{align*}
so
\begin{align}\label{eqn:second}
\P (E) & = \exp \Bigg( -n \KL (\overline{\P} ,\P ) \nonumber \\
& \quad + \sum_{x\in [k]^n} \Q (x) \log \frac{\overline{\P} (x) \P (F)}{\P (x)} \Bigg) \ .
\end{align}

Now we use a specific choice for $\Q$. Let $\Q (x) = \1 \{ x\in E \} \P (x) /\P (E) := P_E (x)$ for short. Then
\[
\sum_{x\in [k]^n} \Q (x) \log \frac{\overline{\P} (x) \P (E)}{\P (x)} = -\KL (\P_E ,\overline{\P}) \ ,
\]
and so
\[
\P (E) \leq \exp \left( -n \KL (\overline{\P} ,\P ) \right) \ .
\]
If $E$ is convex and $\Q$ is supported on $E$ (note that with the above choice this is true), then $\overline{\P} \in E$ and hence
\[
\P (E) \leq \exp \left( -n \inf_{\P' \in E} \KL (\P' ,\P ) \right) \ .
\]


\section{Proof of Proposition~\ref{prop:asymptotics}}

We begin by providing the road map for the proof. The high-level argument is that when $\epsilon$ is small, the minimizer of $\min_{\Q \in E} \KL (\Q ,\P )$ will be close to $\P$. When $\P$ and $\Q$ are close, $\KL (\Q ,\P ) \approx \chi^2 (\Q ,\P )$. Minimizing the chi-squared divergence instead of the KL-divergence on $E$ would precisely give the value $\epsilon^2 /(2 \Var_\P (\cF ))$.

Carrying out the proof formally requires care, in particular to be able to switch between $\min_{\Q \in E} \KL (\Q ,\P )$ and $\min_{\Q \in E} \chi^2 (\Q ,\P )$.

We begin by upper bounding the order of magnitude of $\min_{\Q \in E} \KL (\Q ,\P )$ in terms of $\epsilon$. This will be necessary to control the error we induce by switching between the two optimizations.

Let $\Q' \in E$ be such that $q'_i/p_i = \lambda w_i + \nu$ with some $\lambda ,\nu >0$. Note that in order for $\Q'$ to be a proper distribution we must have
\begin{equation}\label{eqn:distr}
1 = \sum_{i\in [k]} p_i (\lambda w_i +\nu ) = \lambda \cF (\P) + \nu \ .
\end{equation}
In order for $\Q'$ to be in $E$ we need
\begin{align}\label{eqn:inE}
\epsilon & = \sum_{i\in [k]} (q'_i -p_i) w_i \nonumber \\
& = \sum_{i\in [k]} p_i w_i (\lambda w_i +\nu -1) \nonumber \\
& = \lambda \sum_{i\in [k]} p_i w_i (w_i -\cF (\P )) \nonumber \\
& = \lambda \Var_\P (\cF ) \ ,
\end{align}
where in the third line we used \eqref{eqn:distr}.

From \eqref{eqn:distr} and \eqref{eqn:inE} we can conclude that
\begin{align*}
q'_i /p_i & = \lambda w_i +1 -\cF (\P ) \\
& = 1 + \frac{\epsilon}{\Var_\P (\cF )} (\cF (\P )-w_i) \\
& = 1+O(\epsilon ) \ ,
\end{align*}
for all $i\in [k]$.

Note that clearly $\min_{\Q \in E} \KL (\Q ,\P ) \leq \KL (\Q' ,\P )$. We will upper bound the right hand side when $\epsilon$ is small. In particular, we will use the Taylor expansion of $x\log x$ around $x=1$ with a Lagrange remainder term, i.e.
\[
x\log x = (x-1) + \frac{1}{2} (x-1)^2 - \frac{1}{6 \xi^2} (x-1)^3 \ ,
\]
where $\xi \in (1-a,1+a)$ and $a$ is the radius of the expansion. Since we concluded that $q'_i/p_i$ is close to 1, we can choose $a$ to be some arbitrary constant when $\epsilon$ is small enough.

Using the Taylor expansion above, and the fact that $q'_i/p_i \in (1-O(\epsilon),1+O(\epsilon )) $ we get that
\begin{align*}
\KL (\Q' ,\P ) & = \sum_{i\in [k]} p_i \frac{q'_i}{p_i} \log \frac{q'_i}{p_i} \\
& = \sum_{i\in [k]} p_i \Bigg( \left( \frac{q'_i}{p_i} -1 \right) + \frac{1}{2} \left( \frac{q'_i}{p_i} -1 \right)^2 \\
& \qquad - \frac{1}{6 \xi_i^2} \left( \frac{q'_i}{p_i} -1 \right)^3 \Bigg) \\
& \leq \frac{1}{2} \sum_{i\in [k]} \left( \frac{q'_i}{p_i} -1 \right)^2 + O(\epsilon^3 ) \\
& \leq \frac{1}{2} \left( \sum_{i\in [k]} \frac{{q'}_i^2}{p_i} -1 \right) + O(\epsilon^3 ) \ .
\end{align*}

Plugging in $q'_i/p_i = \lambda w_i +\nu$ and using \eqref{eqn:distr} and \eqref{eqn:inE} we can continue as
\begin{align*}
\KL (\Q' ,\P ) & \leq \frac{1}{2} \left( \sum_{i\in [k]} \frac{{q'}_i^2}{p_i} -1 \right) + O(\epsilon^3 ) \\
& = \frac{1}{2} \left( \sum_{i\in [k]} q'_i \lambda w_i +\nu -1 \right) + O(\epsilon^3 ) \\
& = \frac{1}{2} \lambda \sum_{i\in [k]} q'_i (w_i -\cF (\P ) ) + O(\epsilon^3 ) \\
& = \frac{\epsilon^2}{2 \Var_\P (\cF )} + O(\epsilon^3 ) \ .
\end{align*}

So far we have shown that $\min_{\Q \in E} \KL (\Q ,\P ) \leq \epsilon^2/(2 \Var_\P (\cF )) + O(\epsilon^3 )$. We now use this to switch from the optimization of the KL-divergence to that of the $\chi^2$-distance.

First we use the upper bound above to conclude that the unique minimizer\footnote{We know that $\Q^*$ is unique since $E$ is convex.} to $\min_{\Q \in E} \KL (\Q ,\P )$ denoted by $\Q^*$ is also close to $\P$ in Total-Variation distance. This fact is a simple consequence of Pinsker's inequality:
\[
\TV (\Q^* ,\P ) \leq \sqrt{\KL (\Q^* ,\P )/2} = O(\epsilon ) \ ,
\]
where $\TV (\cdot ,\cdot )$ denotes the Total Variation distance, and on the right side we used $\KL (\Q^* ,\P ) \leq O(\epsilon^2 )$. Denoting the Total Variation ball of radius $z$ around $\P$ by $B_{\TV} (\P ,z)$ we have now shown that $\Q^* \in B_{\TV} (\P ,O(\epsilon ))$.

We are finally in position to formally show the lower bound for $\min_{\Q \in E} \KL (\Q ,\P )$. In particular
\begin{align*}
\min_{\Q \in E} \KL (\Q ,\P ) & = \min_{\Q \in E \cap B_{\TV} (\P ,O(\epsilon ))} \KL (\Q ,\P ) \\
& = \min_{\Q \in E \cap B_{\TV} (\P ,O(\epsilon ))} \Bigg( \frac{1}{2} \left( \sum_{i\in [k]} \frac{q_i^2}{p_i} -1 \right) \\
& \quad - \frac{1}{6 \xi (q_i ,p_i)^2} \left( \frac{q_i}{p_i} -1 \right)^3 \Bigg) \ ,
\end{align*}
using the same Taylor-expansion as before. Note that the Taylor expansion is valid here because we are only considering distributions $\Q$ that are close to $\P$, i.e. $\Q \in E \cap B_{\TV} (\P ,O(\epsilon ))$.

However, for distributions $\Q$ in $B_{\TV} (\P ,O(\epsilon ))$ we have $q_i/p_i -1 = O(\epsilon )$. Hence we can continue as
\begin{align*}
\min_{\Q \in E} & \KL (\Q ,\P ) \\
& = \min_{\Q \in E \cap B_{\TV} (\P ,O(\epsilon ))} \Bigg( \frac{1}{2} \left( \sum_{i\in [k]} \frac{q_i^2}{p_i} -1 \right) \\
& \quad - \frac{1}{6 \xi (q_i ,p_i)^2} \left( \frac{q_i}{p_i} -1 \right)^3 \Bigg) \\
& \geq \min_{\Q \in E \cap B_{\TV} (\P ,O(\epsilon ))} \frac{1}{2} \left( \sum_{i\in [k]} \frac{q_i^2}{p_i} -1 \right) - O(\epsilon^3 ) \\
& \geq \min_{\Q \in E} \frac{1}{2} \left( \sum_{i\in [k]} \frac{q_i^2}{p_i} -1 \right) - O(\epsilon^3 ) \ .
\end{align*}

All that is left to do is to solve the optimization of the $\chi^2$-divergence. In detail, the optimization we need to solve is
\begin{align*}
\min & \frac{1}{2} \left( \sum_{j\in [k]} \frac{q_j^2}{p_j} -1 \right) \ \textrm{s.t.} \\
& \sum_{j\in [k]} q_j =1,\ q_j \geq 0,\ \forall j\in [k]\ ,\\
& \sum_{j\in [k]} w_j (q_j-p_j) = \epsilon\ .
\end{align*}
Taking the derivative of Lagrangian w.r.t. $q_j$ yields
\[
\frac{\partial}{\partial q_j} \mathcal{L}(\underline{q},\lambda ,\nu ,\underline{\eta}) = \frac{q_j}{p_j} -\lambda w_j -\nu -\eta_j \ .
\]
Equating this to zero and rearranging gives an expression for the optimizer $\Q$.

Without loss of generality, we can assume that $\P$ is in the interior of the simplex, since otherwise we would just restate the entire argument in lower dimension. If $\epsilon$ is small enough then the optimizer will satisfy $q_j>0\ \forall j\in [k]$\footnote{We omit a detailed argument here, but this is clear: the optimization problem considered here is searching for an ellipse centered at $\P$ that touches the half-space $E$.}. Thus the KKT optimality conditions give $\eta_j =0$ for all $j\in [k]$. Hence we have that the solution of the optimization $\Q^*$ satisfies
\[
\frac{q^*_j}{p_j} = \lambda w_j + \nu \ .
\]

From this point on we continue the same way as we did at the beginning of the proof to finally conclude
\[
\min_{\Q \in E} \KL (\Q ,\P ) \geq \frac{\epsilon^2}{2 \Var_\P (\cF )} - O(\epsilon^3 ) \ .
\]


\section{Figures for numerical experiments}

\subsection{Linear Functionals}

We present the plots corresponding to the numerical experiments that we omitted from the main body of the paper. The plots shown here correspond to experiments with various values of the true distribution. Regardless, all experiments tell a similar story to the one outlined in the paper.

\begin{figure*}[h]
\begin{center}
\includegraphics[width = 3in]{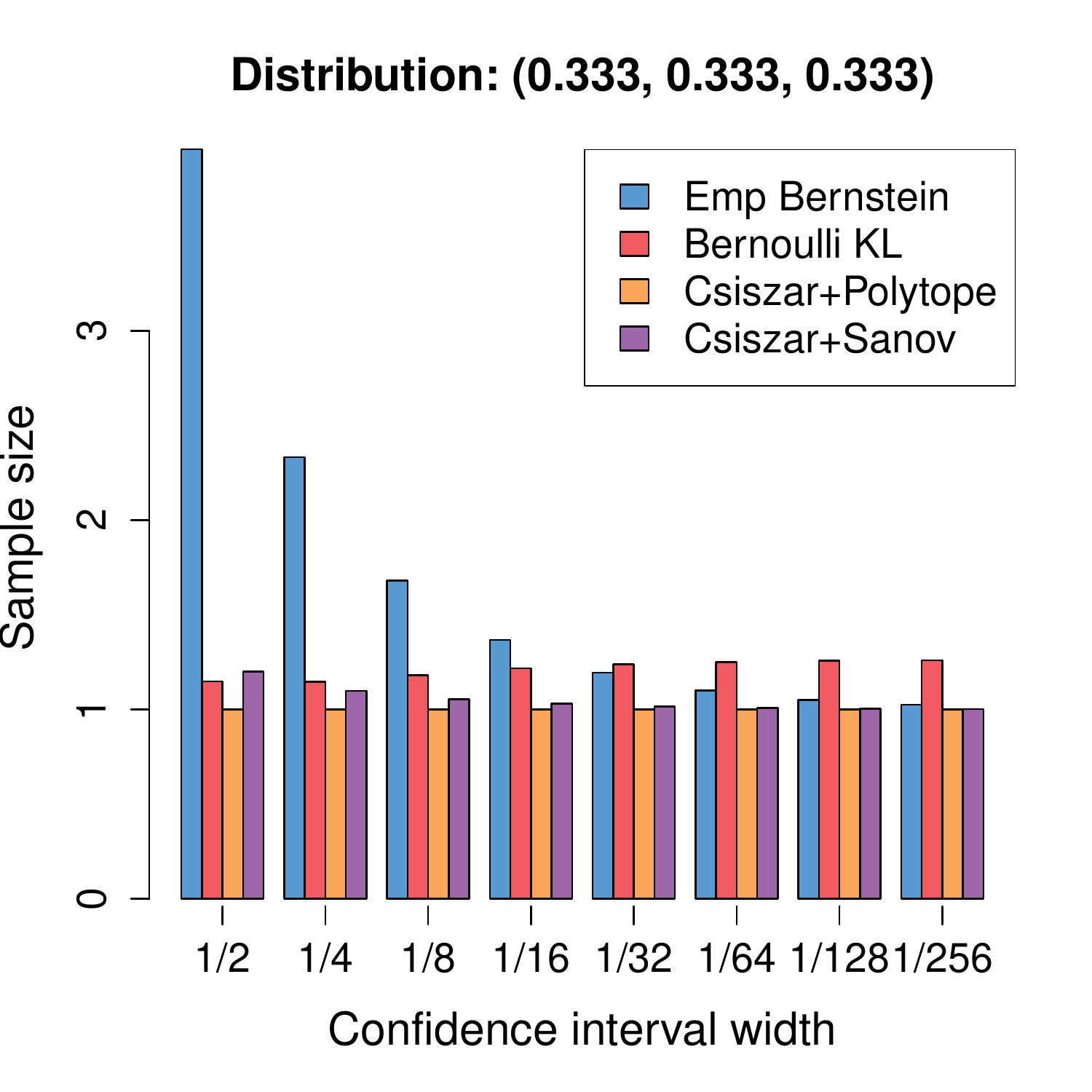}
\includegraphics[width = 3in]{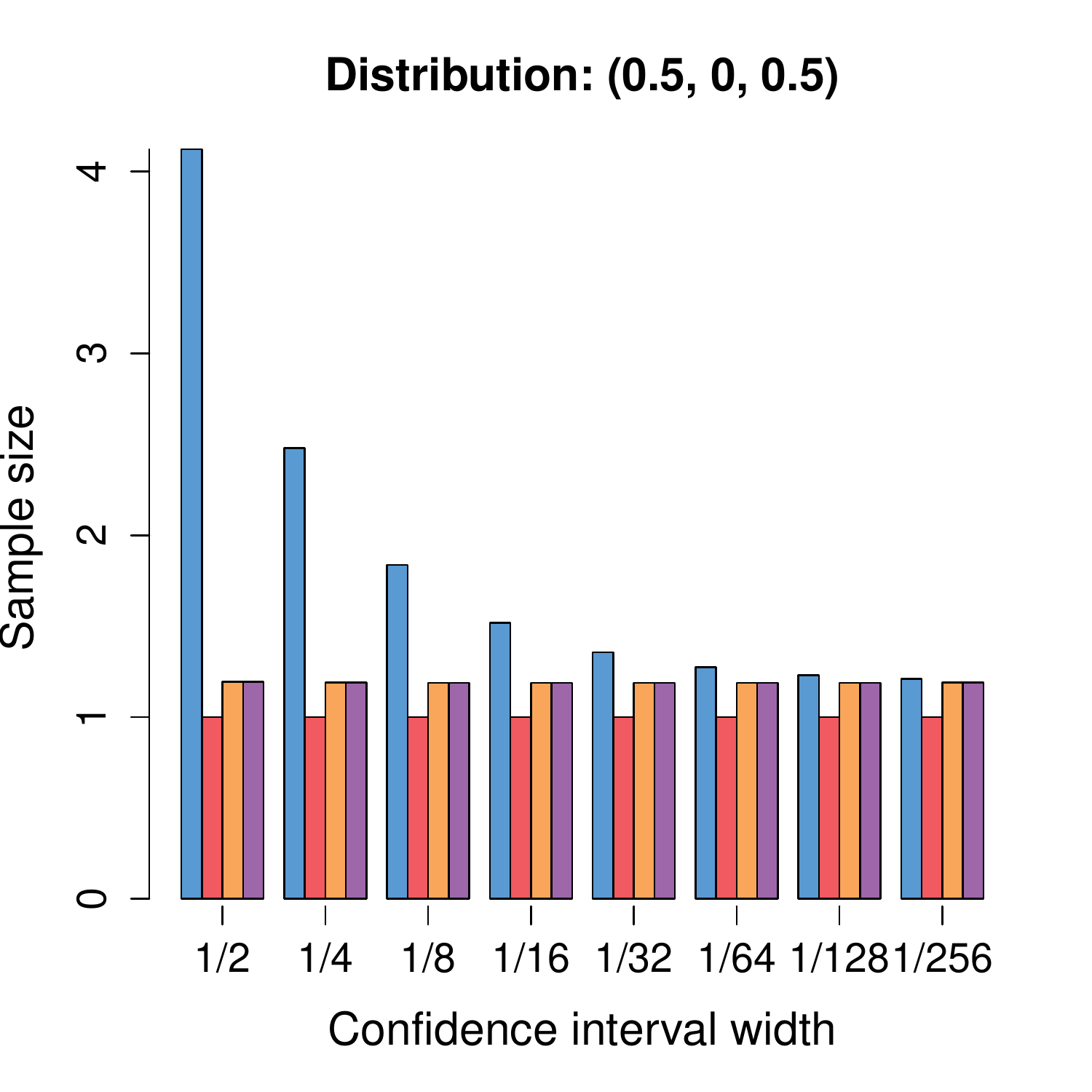}
\includegraphics[width = 3in]{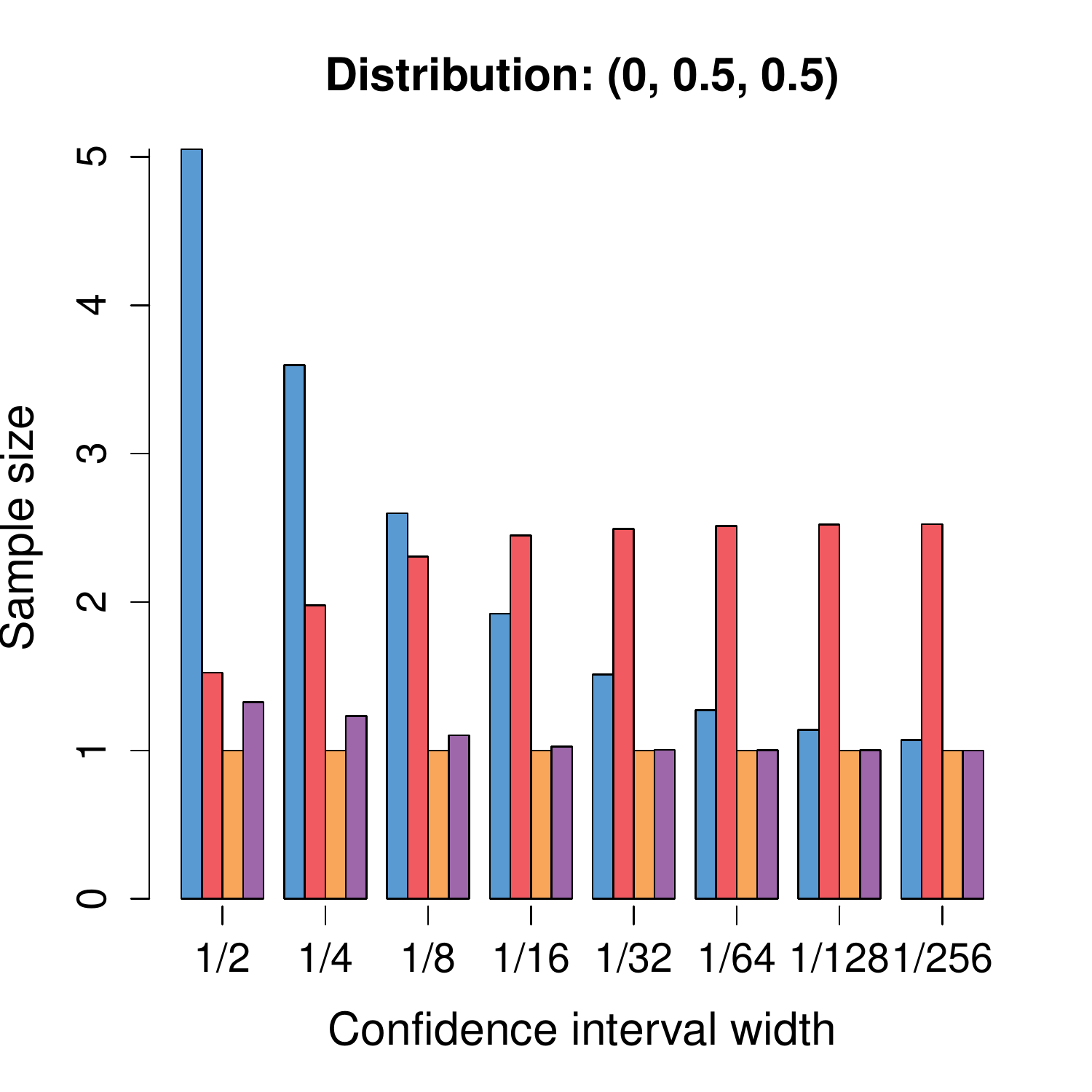}

\caption{Average sample size needed for the width of the confidence bound for the mean to reach a desired level, for various distributions. The high-level findings are similar for all cases: Empirical Bernstein (blue) performs poorly in the small sample regime (large interval width), but improves as the sample size increases. Bernoulli-KL (red) performs relatively well for small samples, but its performance deteriorates, unless the true distribution is Bernoulli, in which case it performs best. Our new bounds (orange and purple) perform best uniformly across all sample sizes, and have comparable performance to the Bernoulli-KL when the distribution is Bernoulli.\label{fig:linear_k3}}
\end{center}
\end{figure*}

\begin{figure*}[h]
\begin{center}
\includegraphics[width = 3in]{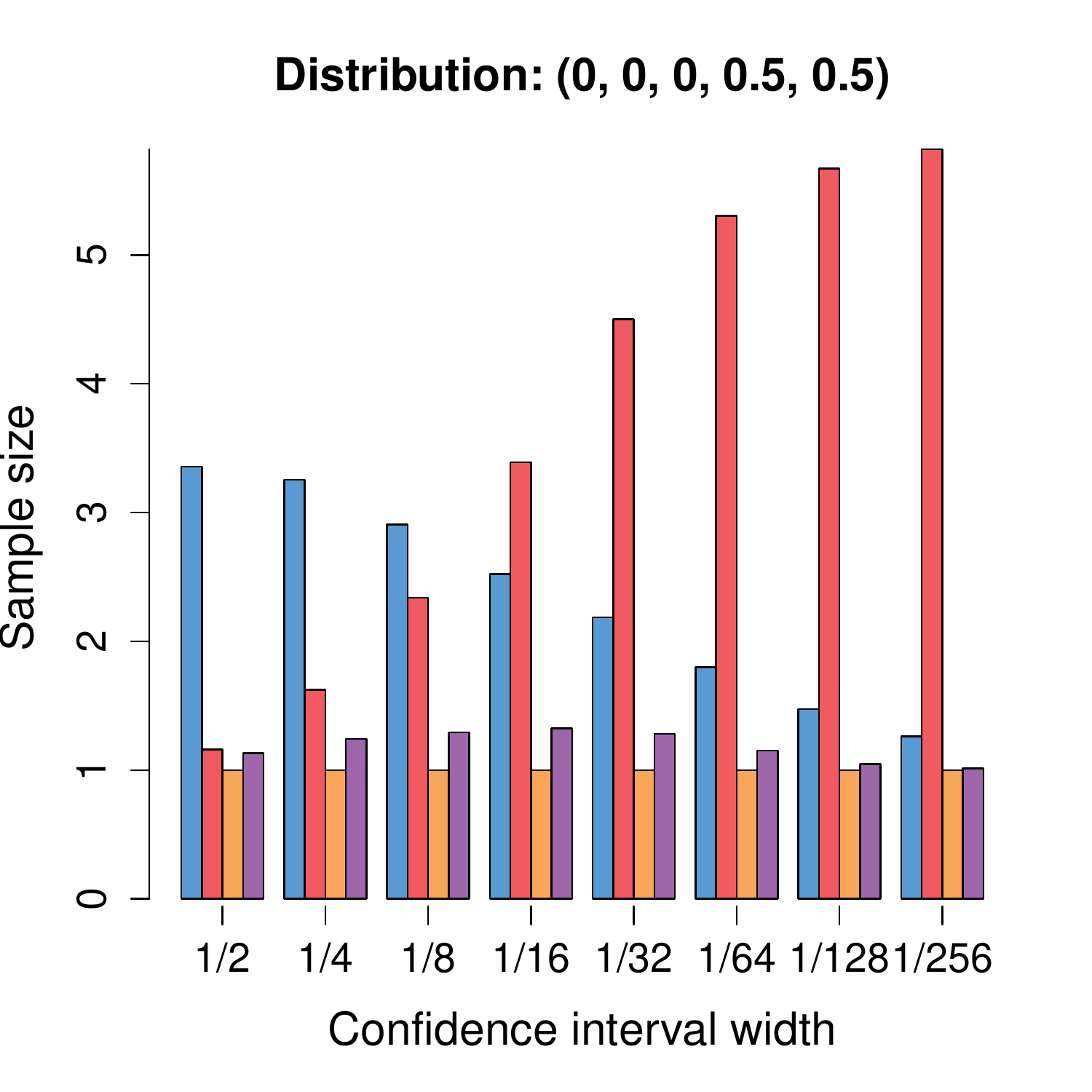}
\includegraphics[width = 3in]{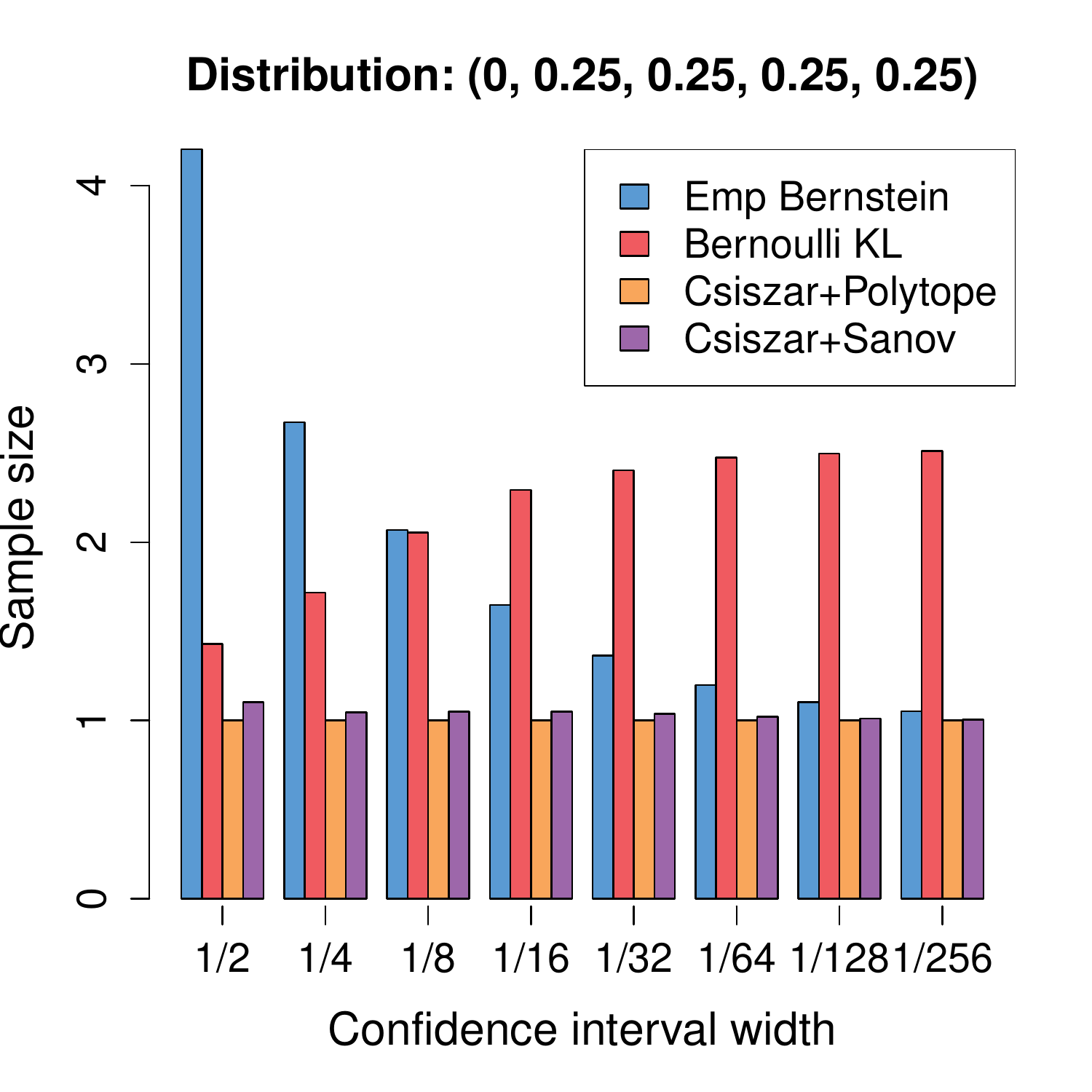}

\caption{Average sample size needed for the width of the confidence bound for the mean to reach a desired level, for various distributions. The high-level findings are similar for all cases: Empirical Bernstein (blue) performs poorly in the small sample regime (large interval width), but improves as the sample size increases. Bernoulli-KL (red) performs relatively well for small samples, but its performance deteriorates, unless the true distribution is Bernoulli, in which case it performs best. Our new bounds (orange and purple) perform best uniformly across all sample sizes, and have comparable performance to the Bernoulli-KL when the distribution is Bernoulli..\label{fig:linear_k5}}
\end{center}
\end{figure*}

\subsection{Quantiles}

The larger the alphabet size $k$, potentially the bigger problem the union bound becomes when using the KL-Bernoulli CDF bounds. We present similar numerical experiments to those in the main body of the paper, but for $k=10$. The results tell a similar story: the performance of the KL-based bounds is not much worse than the DKWM near the median, but get much better for quantiles far from the median.

\begin{figure*}[h]
\begin{center}
\includegraphics[width = 3in]{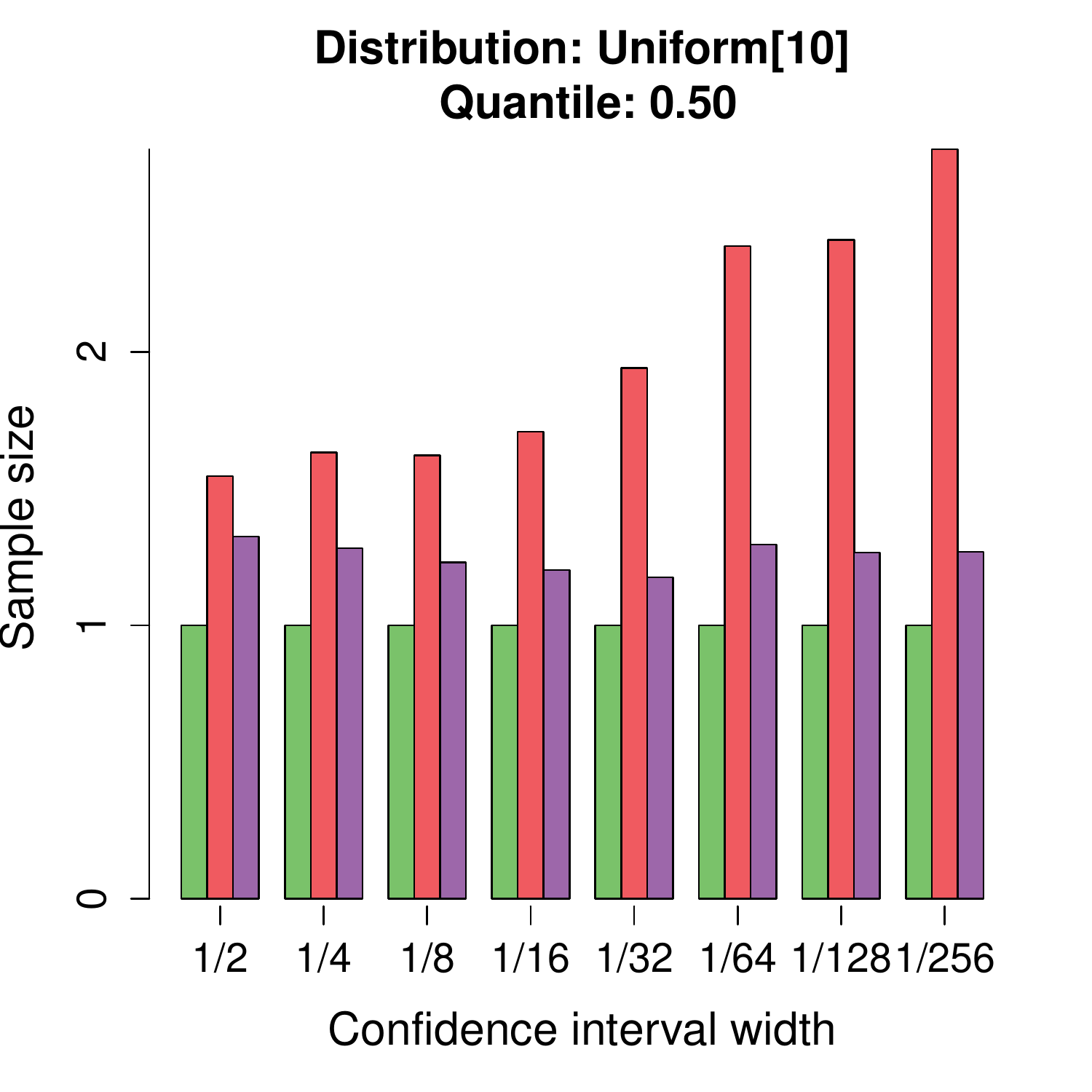}
\includegraphics[width = 3in]{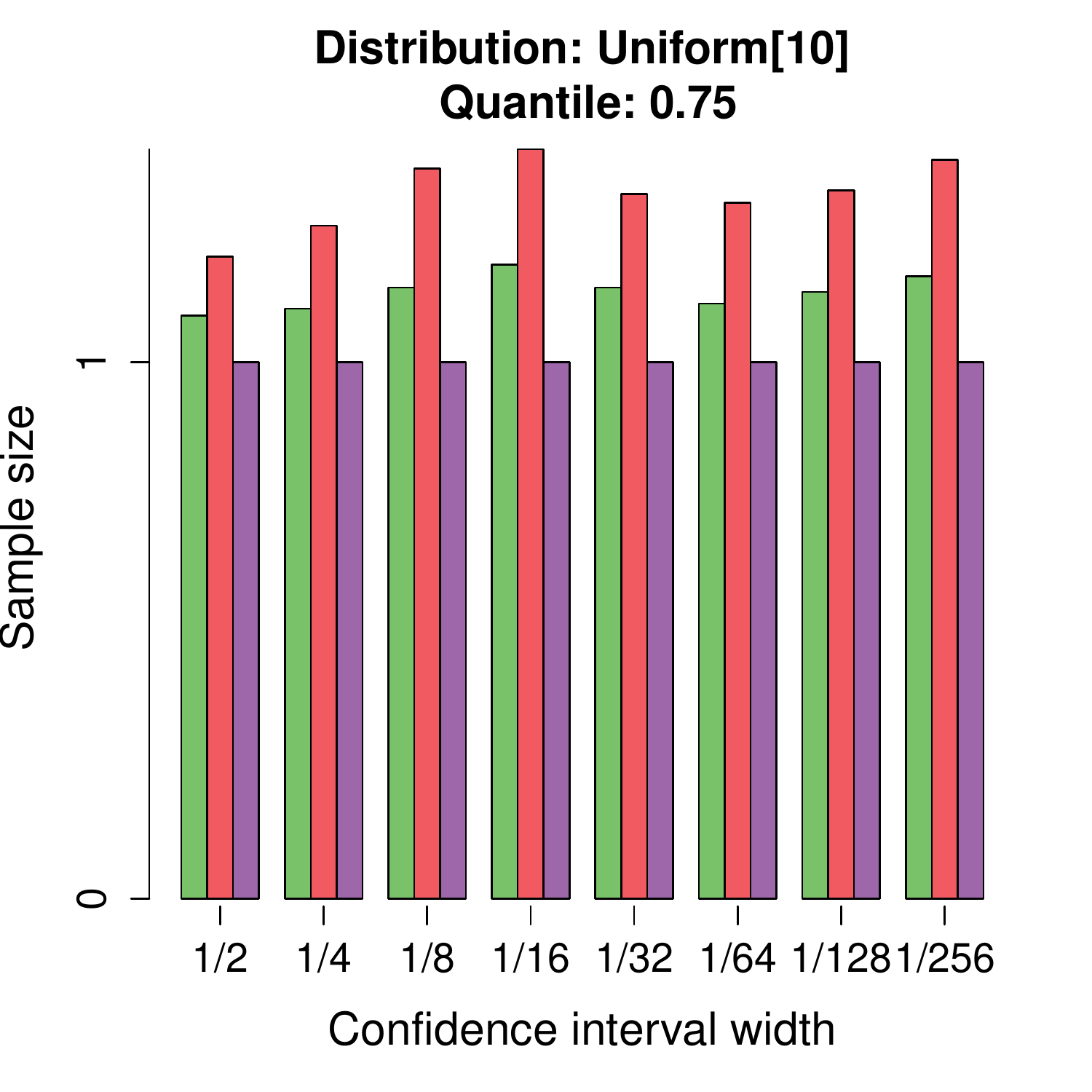}
\includegraphics[width = 3in]{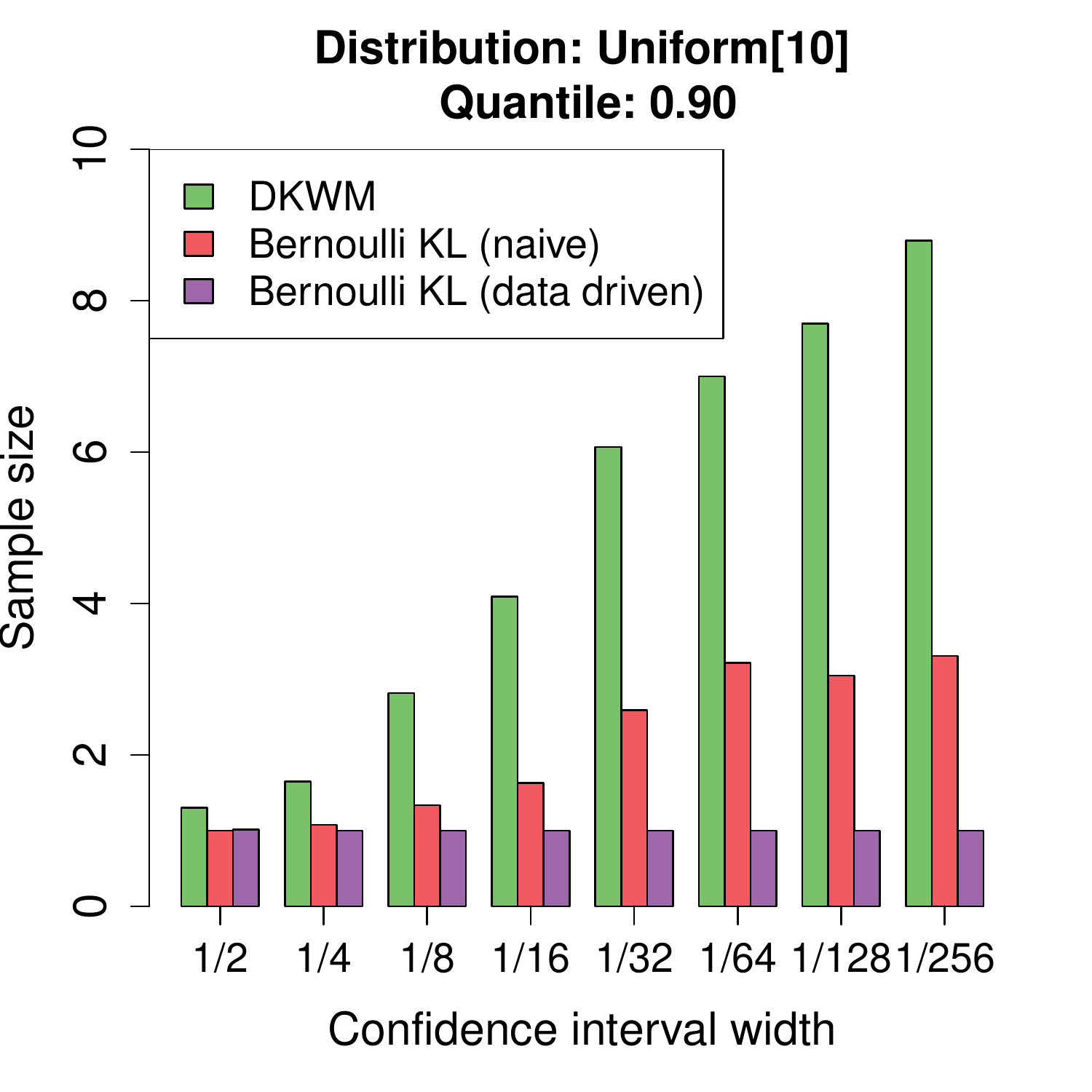}

\caption{Average sample size needed for the width of the confidence bound for the quantile to reach a desired level, for various quantiles. The true distribution is $\Unif [10]$ in all cases. The Bernoulli-KL bound with a data-driven union bound (purple) shows better performance compared to the one with a naive union bound (red)across the board. The figures indicate comparable performance between the DKWM bound (green) and the Bernoulli-KL bound with a data-driven union bound (purple) for quantiles around the median. However, for the 90\% quantile, the Bernoulli KL bounds clearly outperform the DKWM bounds.\label{fig:quantiles_k10}}
\end{center}
\end{figure*}

\end{document}